\numberwithin{equation}{section}
\theoremstyle{thmstyleone}%
\newtheorem{theorem}{Theorem}%
\newtheorem{proposition}[theorem]{Proposition}%
\newtheorem{lem}[theorem]{Lemma}
\theoremstyle{thmstyletwo}%
\newtheorem{remark}{Remark}%
\theoremstyle{thmstylethree}%
\newtheorem{definition}{Definition}%
\newtheorem{notat}{Notation} 
\numberwithin{theorem}{section}
\numberwithin{remark}{section}
\numberwithin{definition}{section}
\begin{document}

\title{On the effect of the Coriolis force on the enstrophy cascade}

\author*[1]{\fnm{Yuri} \sur{Cacchiò }}\email{yuri.cacchio@gssi.it}

\author*[2]{\fnm{Amirali} \sur{Hannani}}\email{amirali.hannani@kuleuven.be}

\author*[3]{\fnm{Gigliola} \sur{Staffilani}}\email{gigliola@math.mit.edu}

\affil[1]{\orgname{Gran Sasso Science Institute}, \orgaddress{\street{Viale Francesco Crispi, 7}, \city{L'Aquila}, \postcode{67100}, \state{Italy}}}

\affil[2]{\orgdiv{Instituut voor Theoretische Fysica}, \orgname{KU Leuven}, \orgaddress{\street{Celestijnenlaan 200d}, \city{Leuven}, \postcode{3001}, \state{Belgium}}}

\affil[3]{\orgdiv{Department of Mathematics}, \orgname{Massachusetts Institue of Technology}, \orgaddress{\street{77 Massachusetts Ave}, \city{Cambridge}, \postcode{02139-4307}, \state{MA}, \country{USA}}}

\abstract{We study the direct enstrophy cascade at small spatial scales in statistically stationary forced-dissipated 2D Navier-Stokes equations subject to the Coriolis force in the $\beta$-plane approximation.
We provide sufficient conditions inspired by \cite{bed2D,buler} to prove that at small scales, in the presence of the Coriolis force,  the so-called third-order structure function's asymptotics follows the third-order universal law of 2D turbulence without the Coriolis force. 
Our result indicates that at small scales, the enstrophy flux from larger to smaller
scales is not affected by the Coriolis force, confirming experimental and numerical observations.

To the best of our knowledge, this is the first mathematically rigorous study of the above equations. }
\keywords{2D Turbulence, Enstrophy cascade, $\beta$-plane approximation}
\maketitle
\tableofcontents
\section{Introduction}
Two-dimensional Navier-Stokes equations with additive stochastic noise is a canonical model for studying turbulence in two dimensions \cite{Batchelor2,Batchelor,eyink,Fjrtoft,frisch,Kraichnan,kuk,salmon}. 
Physically, considering these equations in two dimensions becomes relevant   
when we study fluids at large scales. For example, in geophysical motions, as a 
first approximation, we can consider two-dimensional flows due to the large 
aspect ratio (the ratio of lateral to vertical length scales) \cite{boffetta,Cope,lindborg,vallis1}.
 
Geophysical flows commonly appear on rotating planets \cite{gallagher1,holton,McWilliams,pedlosky,salmon,vallis1}. Taking into account  this 
motion, one should insert the effect of the Coriolis force into stochastic Navier-Stokes equations, resulting in
\begin{equation}\label{Problem}\left\{
    \begin{array}{rl}
    \partial_t u+\left(u\cdot \nabla\right)u+f u^\perp &=\nu\Delta u-\alpha u-\nabla p+\varphi;\\
    \nabla \cdot u&=0,
\end{array}
\right.
\end{equation}
where $u=(u^1,u^2)$ and $p$ are unknown velocity field and pressure, $(u\cdot\nabla)$ stands for the differential operator $u^1\partial_{x^1}+u^2\partial_{x^2}$ with $x=(x^1,x^2)$, $fu^\perp$ is the Coriolis force where $f$ is the Coriolis parameter (cf. \eqref{Coriolisapprox}) and $u^\perp=(-u^2,u^1)$, $\varphi$ is the stochastic process, and damping is provided by a combination of drag $\alpha>0$ and viscosity $\nu>0$. We refer the interested reader to \cite{salmon,vallis1} for complete derivations of the equations. 

In this paper, we use the well-known $\beta$-plane approximation
\begin{equation} \label{Coriolisapprox}
    f=f_0+\beta x^2
\end{equation}
where $f_0$ and $\beta$ are two constants that depend on a reference latitude. This regime is sufficient to capture the dynamic effects of
rotation (see \cite{salmon,vallis1} for further details).

As mentioned in our previous paper \cite{GigliolaAmiraliYuriWP} regarding the well-posedness of the equations \eqref{Problem}, introducing the Coriolis force in the $\beta$-plane approximation breaks the symmetry along the $x^2$ direction making the system anisotropic. Hence, we cannot consider a standard double periodic domain.
This leads us to take the most treatable, physically relevant domain: a periodic channel \cite[p 277]{salmon}. Then, we pose the system of equations \eqref{Problem} on a periodic domain in $x^1$, $\mathbb{T}_{x^1}=[0,L)$ torus of size $L$, and a bounded interval in $x^2$, $I=[a,b]$, equipped with periodic boundary conditions in $x^1$ and no-slip boundary conditions in $x^2$, i.e.
\begin{align}
    u(t,0,x^2)&=u(t,L,x^2),\label{periodicboundarycond}\\
    u(t,x^1,\partial I)&=0. \label{noslipboundary}
\end{align}
Finally, we assume that the stochastic process is spatially regular and white in time as in \cite{GigliolaAmiraliYuriWP}, i.e. 
\begin{equation}\label{stochasticprocess}
    \varphi(t,x)=\frac{\partial}{\partial t}\zeta(t,x),\ \ \ \ \zeta(t,x)=\sum_{j=1}^{\infty} b_j\beta_j(t)e_j(x),\ \ \ \ t\geq0
\end{equation}
where $\{e_j\}$ is a divergence-free orthonormal basis in $H$  (completion of divergence-free smooth functions with proper boundary conditions in $L^2(\mathbb{T}\times I)$,  cf. Definition \ref{spaces definitions}), $b_j$ are  constants such that, 
\begin{align} 
    \varepsilon&:=\frac{1}{2}\sum_{j=1}^\infty b_j^2<\infty, \label{eq: intro: energy}\\
    \eta&:=\frac{1}{2}\sum_j \fint_{\mathbb{T}\cross I}b_j^2|\nabla \cross e_j|^2<\infty, \label{eq: intro: enstrophyavg}
\end{align}
and $\{\beta_j\}$ is a sequence of independent standard Brownian motions (cf. \eqref{def:stochasticforce}). 

The constants $\varepsilon$, $\eta$ are called average energy and enstrophy input per unit time per unit area respectively. Notice that the energy input is independent of the viscosity $\nu$ and drag $\alpha$.

To get a complete picture of 2D turbulence it is crucial to introduce the notion of vorticity. In terms of vorticity $\omega$, \eqref{Problem} becomes
\begin{equation}\label{vorticityequation}
    \partial_t \omega+\left(u\cdot \nabla\right)\omega+\beta u^2 =\nu\Delta \omega-\alpha \omega+\nabla \cross \varphi
\end{equation}
where 
\begin{equation}\label{vorticitydef}
    \omega:=\nabla \cross u=\partial_1 u^2-\partial_2 u^1.
\end{equation}

The equations \eqref{Problem} with deterministic forcing (deterministic $\varphi$ or $\varphi=0$) have been studied both in mathematics and physics communities. In physics literature here we refer to \cite{Cost,Danilov,huang,nozawa,rhines,scott,ScottPolv,sriniv,vallis4,vallis3}, where this equation is used to model jet stream and turbulence. In the mathematics community we refer to \cite{mustafa,Yuri,Novak,pappalettera,gallagher2,gallagher1,Shepherd}  for questions such as well-posedness and long-time behavior. 

On the other hand, without the Coriolis force, the two-dimensional stochastic Navier-Stokes equations with additive noise (white in time and colored in space) have been studied extensively: we refer to \cite{Bensoussan,daprato,flandoli,Hairer,kuk,Temam}  concerning well-posedness, ergodicity, and long time behavior. As we mentioned, physically these equations are the canonical toy model to study turbulence in 2D \cite{Batchelor2,boffetta,frisch,kuk,Nazarenko,pedlosky,salmon,vallis1}. 

Having both the stochastic noise and the Coriolis force, the equations \eqref{Problem} are considered as a model of the so-called 
$\beta$-plane turbulence \cite{chekhlov,Cost2,Cost,Cope,galerpin2,galerpin1,huang2,salmon,vallis1}. 
In the mathematics community, we refer to \cite{GigliolaAmiraliYuriWP} concerning the well-posedness and associated energy estimates of \eqref{Problem}.

\subsection{Turbulence}
Turbulence in stationary forced dissipated fluids in \emph{3 dimensions} has  a long history. In his seminal work,  Kolmogorov  \cite{kolmogorov1,kolmogorov2,kolmogorov3} predicted the direct cascade of energy (cascade from small wave numbers to large wave numbers) under rather general assumptions. Quantitatively, the above cascade can be characterized using at least one of the following  well-known quantities: ensemble average  of the energy spectrum $k$
\begin{equation}
    E(k) \sim |k|\mathbb{E} (|\hat{u}(k)|^2),
\end{equation}
where $\hat{u}(k)$ is the Fourier transform of the velocity field, or  ensemble average of the $n$ point structure function defined as,
\begin{equation} \label{third order general}
    S_n(r):= \mathbb{E}(|\delta_r(u)|^n):= 
    \mathbb{E}(|u(x+r)-u(x)|^n),
\end{equation}
where $u$ is the velocity field, $r $ is the separating vector and the dependence on the spatial point $x$ is suppressed by assuming homogeneity. Assuming the flow to be isotropic, the celebrated Kolmogorov's $\frac{4}{5}$th law predicts the so called \emph{third order structure function} to behave as
\begin{equation}\label{thirdorderenergy}
    S_3^{\|}(r):= \mathbb{E}\left(\delta_r(u)\cdot\frac{r}{|r|}\right)^3 \sim -\frac45 \varepsilon |r|, \ \ \ l_\nu(\nu) \ll |r| \ll l_I,
\end{equation}
  where $l_\nu(\nu)$ is the scale where dissipation happens, $l_I$ is the scale where we inject energy, $\varepsilon$ denotes the injected energy, $S_3^{\|}$ is the third order \textit{longitudinal} structure function and $\delta_{r}(u):= u(x+r)-u(x)$.

  Kolmogorov's argument relies on the fact that we have only energy conservation. In \emph{2 dimensions}  phenomenology of statistically stationary, forced-dissipated flow changes drastically as we have two conservation laws:  energy and enstrophy. This leads to the more complicated \emph{dual cascade} picture, first argued by Fjrtoft in \cite{Fjrtoft}, and later by Batchelor and Kraichnan \cite{Batchelor2,Kraichnan}. In this regime, we have inverse energy cascade:  from small spatial scales (large wave numbers) to large spatial scales (small wave numbers), and direct enstrophy cascade from large spatial scales 
  to small spatial scales. 
  More explicitly, denote the dissipation, energy injection, and
 friction scale by $l_\nu$, $l_{I}$, and $l_\alpha$ respectively ($l_\nu(\nu) \ll l_{I} \ll l_\alpha(\alpha)$). The above dual cascade picture, predicts that energy is transferred from the injection scale to  the friction scale and damped by friction (\emph{inverse cascade}). On the other hand, enstrophy cascades from the injection scale to the dissipation scale and dissipates there by viscosity (\emph{direct cascade}).
 
In two dimensions, the analogous of the $\frac{4}{5}$th law \eqref{thirdorderenergy} has been realized only in 99' by Bernard, Lindborg, and Yakhot \cite{bernard,lindborg,yakhot}. The two-dimensional analogue is given in terms of the third-order structure functions assuming isotropy  as follows (recall 
$l_\nu \ll l_I \ll l_\alpha$)
\begin{subequations}
\begin{align}
S_3^{\|}(r)&=\mathbb{E}\left(\delta_r u\cdot\frac{r}{|r|}\right)^3\sim \frac{1}{8}\eta|r|^3, \ \ \
 l_\nu \ll |r| \ll l_I \label{thirdorederenstrophy2d},\\
S_3^{\|}(r)&=\mathbb{E}\left(\delta_r u\cdot\frac{r}{|r|}\right)^3\sim \frac{3}{2}\varepsilon|r|,  \ \ \ l_I \ll |r| \ll l_\alpha, \label{thirdorederenergy2d}
\end{align}
\end{subequations}
where as before $\varepsilon$ is the injected energy, and $\eta$ is the injected enstrophy.

We can also write \eqref{thirdorederenstrophy2d} in terms of the mixed velocity-vorticity structure function as predicted by Eyink \cite{eyink},
\begin{equation}\label{eq: velocity-vorticity str}
    S_3^{\|}(r)=\mathbb{E}\left(|\delta_r \omega|^2\delta_r u\cdot\frac{r}{|r|}\right)\sim -2\eta|r|, 
 \ \ \ l_\nu \ll |r| \ll l_I,
\end{equation}
originally derived in \cite{yaglom} for passive scalar
turbulence.

In $2D$, the energy spectrum is also predicted over the above-mentioned scales \cite{Batchelor2,Kraichnan}
\begin{subequations}
\begin{align}
    |k|&\mathbb{E}|\hat{u}(k)|^2\sim\varepsilon^{2/3}|k|^{-5/3},\ \ \ l^{-1}_{\alpha}\ll|k|\ll l^{-1}_I,\label{energyspectrum}\\
    |k|&\mathbb{E}|\hat{u}(k)|^2\sim\eta^{2/3}|k|^{-3},\ \ \ \ l^{-1}_{I}\ll|k|\ll l^{-1}_\nu.\label{enstrophyspectrum}
\end{align}
\end{subequations}

As we stated, the above picture has been rather well understood in the physics community by theoretical, and numerical arguments, and has been confirmed by experiments \cite{bernard,boffetta2,boffetta,eyink,huang2,Kraichnan2,lindborg,yakhot}. However, there are few rigorous mathematical works in this direction \cite{bed3D,bed2D,papathanasiou,buler}. Most notably we should mention \cite{bed3D,bed2D} where a very physically relevant and weak sufficient condition  (cf. Section \ref{1.4remark}) is provided for proving \eqref{thirdorederenstrophy2d}-\eqref{thirdorederenergy2d}, \eqref{eq: velocity-vorticity str} in 2D and \eqref{thirdorderenergy} in 3D for the counterpart of equations \eqref{Problem} \textbf{without the Coriolis force}.

\subsection{Result: Turbulence}
Adding the Coriolis force changes the behavior of solutions of \eqref{Problem}. Still we have two conservation laws: enstrophy and energy, which suggests that the dual cascade picture could persist. However, a priori it is not clear how the above laws \eqref{thirdorderenergy}, \eqref{thirdorederenstrophy2d}-\eqref{thirdorederenergy2d} and \eqref{eq: velocity-vorticity str} are modified in this situation.
This problem was first derived by Rhines in \cite{rhines} and argued later in \cite{chekhlov,Cost2,Cost,Cope,galerpin2,galerpin1,huang2,salmon,vallis1}. In these works, theoretical arguments suggest that at large scales the behavior of the spectrum will  change completely i.e. instead of \eqref{energyspectrum} we have
\begin{equation}\label{beta spectrum}
    |k|\mathbb{E}|\hat{u}| \sim \beta^2 |k|^{-5},
\end{equation}
where $\beta$ is the coefficient appearing in the beta plane approximation. On the other hand, numerical simulations and certain heuristics \cite{chekhlov,Cope,galerpin2,huang2,salmon,vallis1} suggest that at small scales we still have the same relations as  \eqref{thirdorederenstrophy2d}, \eqref{eq: velocity-vorticity str} and \eqref{enstrophyspectrum}.
The main goal of this paper is to provide a (rather weak) sufficient condition (see discussion on the assumption \eqref{uniform bound on vorticity} below) for proving the direct cascade of the enstrophy at small scales. 
In other words, we prove that at small scales, in the presence of the Coriolis force, \eqref{thirdorederenstrophy2d} and \eqref{eq: velocity-vorticity str} are valid assuming \eqref{uniform bound on vorticity}, meaning that at small scales the Coriolis force is negligible for the behavior of the third order structure function. 

\subsubsection{Statement of the Result}\label{statementofresult}
 More precisely, consider the 2D Navier-Stokes equations with additive white in time and colored in space noise with Coriolis force, in the $\beta$-plane approximation i.e. \eqref{Problem}. In \cite{GigliolaAmiraliYuriWP} we prove that the system \eqref{Problem} is well-posed and that these equations possess at least one stationary measure. By stationary we mean that the law of $u(\cdot)$ coincides with the law of $u(\cdot+ \tau)$ for any $\tau \geq 0$. Denote the expectation w.r.t this stationary solution by $\mathbb{E}$. Then we assume an uniform bound of $\mathbb{E}\norm{\omega}^2_{L^2}$ in $\alpha$ and $\nu$ (cf. \eqref{uniform bound on vorticity}), we refer to Section \ref{1.4remark} for the motivation related to this assumption. Our main result stated in Theorem \ref{thm:enstrophycascade}, illustrates the following relations concerning the third order structure function at small scales ($l_{\nu} \ll l \ll l_{I}$),  
\begin{align}
    \mathbb{E}&\fint_\mathbb{S}\fint_{\mathbb{T}\cross I}\left|\delta_{ln}\omega\right|^2\delta_{ln}u \cdot n dxdn \sim -2\eta l, \label{into:directcascade3.1a}\\
    \mathbb{E}&\fint_\mathbb{S}\fint_{\mathbb{T}\times I}|\delta_{ln}u|^2\delta_{ln}u\cdot n\ dxdn\sim \frac{1}{4}\eta l^3,\label{intro:directcascade3.1b}
\end{align}
where $\eta$ is the injected enstrophy by the stochastic force \eqref{eq: intro: enstrophyavg}, $\delta_{ln}u= u(x+nl)-u(x)$, $n$ denotes the normal vector and $l$ is a scalar, $\fint$ is the normalized (w.r.t domain size) integration. 

As expressed, the above asymptotics should be understood for small scales $l$ with 
$l_{\nu} \ll l \ll l_{I} \ll l_{\alpha}$, in the inviscid limit ($\nu \to 0$). To make this more precise, first, we fix a scale $l \in [l_{\nu},l_{I}] $. In order to realize the limit $l \to 0$, we  take the limit 
$\nu, \alpha \to 0$, notice that in this limit $l_{\nu}(\nu) \to 0$, and $l_{\alpha}(\alpha) \to \infty$. Only afterward, we take the limit $l_{I} \to 0$ (cf. Theorem \ref{thm:enstrophycascade}).   
On the other hand, in an ongoing project an inverse cascade of energy for statistically stationary solution of \eqref{Problem} at large scales is considered. Here we analyze the fact that  the third order structure function behavior is only determined by the Coriolis force. More precisely, in an appropriate inertial range, $l_{\nu}  \ll l_{I}\ll l \ll l_{\alpha}$, we expect 
\begin{equation}
\mathbb{E}\fint_\mathbb{S}\fint_{\mathbb{T}\times I}|\delta_{ln}u|^2\delta_{ln}u\cdot n\ dxdn\sim C \varepsilon\left(\frac{\beta}{\alpha}\right)^n r^m \text{ with }n\geq1\text{ and } m>1 
\end{equation}
as suggested by its spectral counterpart \eqref{beta spectrum}.

\subsection{Sketch of the proof and main ideas}\label{sec: proof idea}

Let us briefly sketch the proof's ideas and highlight the new difficulties arising in our setup. 

To prove the cascade, we follow the ideas sketched in \cite{bed3D,bed2D,buler}. We follow the evolution of the two-point correlation function, and using stationarity we obtain the so called KHM (Karman-Howarth-Monin) relations \cite{karman,monin}. Let us emphasize that because of the Coriolis force, we have a new term in our modified KHM (cf. Section \ref{sectionKHM}). The process of dealing with each terms, after taking the limits, in the KHM relation is morally similar to \cite{bed2D} at small scales.
Moreover, for treating these terms certain regularity properties are needed. These properties are straightforward in the case of \cite{bed2D} owing to the existing results in the literature, whereas in our case we cannot appeal to these results due to the presence of the new $\beta$ term. Therefore we proved similar (in fact weaker but still sufficiently strong) regularity of these terms in \cite{GigliolaAmiraliYuriWP}. 
More importantly, in our case we need to deal with a new term corresponding to the Coriolis force in the KHM. This constitutes the main part of our analysis. By using the regularity results concerning the invariant measure's support, we are able to  Taylor expand these terms and prove that low-order contributions are miraculously zero, and higher orders are dominated by the effect of $\eta$ at small scales.

\subsection{Remarks}\label{1.4remark}
In this section we make some remarks concerning our result. First, let us explain the physical relevance of the uniform bound on $\mathbb{E}\norm{\omega}^2_{L^2}$ (cf. \eqref{uniform bound on vorticity}).

By using the It\^{o} formula (cf. \cite[Theorem 7.7.5]{kuk}) we obtain in \cite{GigliolaAmiraliYuriWP} the following energy and enstrophy balance for stationary solution: 
stationary energy balance 
\begin{equation}\label{energy balance}
    \alpha\mathbb{E}\norm{u}^2_{L^2}+ \nu\mathbb{E}\norm{\nabla u}^2_{L^2}=\varepsilon,
\end{equation}
stationary enstrophy balance 
\begin{equation}\label{enstrophy balance}
\alpha\mathbb{E}\norm{\omega}_{L^2}^2 +\nu\mathbb{E} \norm{\nabla \omega}_{L^2}^2 =\eta,
\end{equation}
where $u$ is a statistically stationary solution and $\mathbb{E}$ is the stationary measure associated with $u$.
Taking advantage of the above balance relations we may derive from \eqref{uniform bound on vorticity} the following limits: 
\begin{align}
     &\lim_{\nu \to 0} \sup_{\alpha\in(0,1)}\left|\varepsilon-\alpha\mathbb{E}\norm{u}^2_{L^2}\right|=0,\label{eq:WAD2}\\
    &\lim_{\alpha \to 0}\sup_{\nu\in(0,1)}\left|\nu\mathbb{E}\norm{\nabla\omega}^2_{L^2}-\eta\right|=0.\label{eq:WAD4}
\end{align}
Physically, the above expressions mean that in the limit $\alpha,\nu \to 0$, all the injected energy by the stochastic force will be dissipated thanks to the drag $\alpha$ at large scales. Moreover, all the enstrophy injected by this force will be dissipated at small scales by viscosity. Notice that this is aligned with the "natural"  anomalous dissipation assumption appearing in theory of turbulence \cite{frisch}. This assumption states that Navier Stokes equations can balance the external energy independent of the Reynolds number thanks to the effect of the non-linearity. 

As we mentioned, our work is inspired by \cite{bed3D,bed2D} and also \cite{buler}. Our main contribution is to consider the effect of the planetary rotation (Coriolis force). We already explained the physical application of these equations and the difference with the case studied above. Mathematically, dealing with the new term poses several challenges as mentioned in Section \ref{sec: proof idea}.

Let us mention that the averaging over the circle in order to deal with anisotropy is also discussed in \cite{arad,bed3D,kurien,nie}. However, there is a crucial difference in our case: our equations are intrinsically anisotropic because of the Coriolis force, and this average is essential to obtain a quantity only depending on the scale $l$.  

From a physical point of view, it is not obvious that all the contribution of the Coriolis force on the dual cascade vanishes. In fact, in the $\beta$-plane approximation $f_0$ represents the horizontal component of the rotation while $\beta$ represents the variation of the rotation in relation to the curvature of the planets \cite{vallis1}. One might predict that at small scales the effect of curvature $\beta$ is zero but we can not make a priori estimates regarding the other component $f_0$.
In contrast, it can be expected that at large scales the $\beta$ contribution becomes relevant but still, we can not make a priori estimates on $f_0$.

However, for 2d incompressible flow, $f_0$ has no impact on the dynamical evolution at all, then in Section \ref{sectionKHM} the new term derived in the KHM due to the Coriolis force, which we call $\varTheta$, depends only on $\beta$. Moreover, at small scales we show that the contribution of $\varTheta$ on the direct enstrophy cascade is zero, confirming the above-mentioned heuristics as well as numerical observations \cite{chekhlov,Cost2,Cost,Cope,galerpin2,galerpin1,huang2}.

The organization of the paper is as follows. In Section \ref{sectionprelimin}, we state the assumptions, certain terminology and the main results. We then derive in Section \ref{sectionKHM} the modified velocity/vorticity KHM relations related to the Coriolis force. In Section \ref{sectiondirect} we prove the main theorem which shows the direct cascade of enstrophy.

\section{Preliminaries and Main Results}\label{sectionprelimin}
Let us introduce the following vector spaces and set some notations.
\begin{definition}\label{spaces definitions}
We define 
\begin{itemize}
    \item[1)] $D
    =\{u\in C^\infty(\mathbb{T}\cross (a,b)); u(0,x^2)=u(L,x^2) \text{ with compact support in }x^2\}$
    \item[2)] $D_\sigma
    =\{u\in D(\mathbb{T}\cross (a,b)), \nabla \cdot u=0\}$
    \item[3)] $H^m_0
    =\overline{D(\mathbb{T}\cross (a,b))}^{H^m(\mathbb{T}\cross (a,b))}$
    \item[4)] $H
    =\overline{D_\sigma(\mathbb{T}\cross (a,b))}^{L^2(\mathbb{T}\cross (a,b))}$
    \item[5)] $V
    =\overline{D_\sigma(\mathbb{T}\cross (a,b))}^{H^1_0(\mathbb{T}\cross (a,b))}$
\end{itemize}
where $\overline{D(\mathbb{T}\cross (a,b))}^{H^m(\mathbb{T}\cross (a,b))}$ denotes the closure of $D$ in the Sobolev space $H^m$. A similar notation holds for $H$ and $V$.
In addition, $H$ and $V$ are equipped with the induced norm,
\begin{equation*}
    \norm{\cdot}_{L^2(\mathbb{T}\cross I)}; \ \ \ \norm{\cdot}_{H^1(\mathbb{T}\cross I)},
\end{equation*}
respectively. $H'$ and $V'$ denote the dual space (w.r.t the usual inner product $<\cdot,\cdot>$)  of $H$ and $V$.
\end{definition}
Assuming boundary conditions as in \eqref{periodicboundarycond}, \eqref{noslipboundary} and fixing the initial data 
\begin{equation}
    u(0)=u_0\in H,
\end{equation}
we derive the following Cauchy problem 
\begin{equation}\label{Problemleray}\left\{
    \begin{array}{rl}
     \partial_t u+\nu Au+B(u)+\alpha u+Fu&=\varphi; \\
     \nabla \cdot u&=0;\\
     u(t,0,x^2)&=u(t,L,x^2);\\
     u(t,x^1,\partial I)&=0;\\
     u(0)&=u_0\in H 
\end{array}
\right.
\end{equation}
where 
\begin{align}
    A u&=-\Pi\Delta u, \\
    B(u)&=B(u,u)=\Pi((u\cdot\nabla)u),\\
    Fu&=\Pi(fu^\perp),
\end{align}
and
\begin{equation}
    \Pi:H^s(Q,\mathbb{R}^2)\to H^s_\sigma(Q,\mathbb{R}^2), 
\end{equation}
is the orthogonal projection for any bounded Lipschitz domain $Q$ called Leray projection. Here $H^s$ is the standard Sobolev space and $H^s_\sigma$ denotes the $H^s$-divergent free functions. 
The system \eqref{Problemleray} is a weak formulation of \eqref{Problem} in the sense of Leray formulation of Navier-Stokes equations \cite{Leray1,Leray3,Leray2}.  We refer to \cite{GigliolaAmiraliYuriWP} for the equivalence between \eqref{Problem} and\eqref{Problemleray}.

We assume that 
\begin{equation} \label{def:stochasticforce}
    \varphi(t,x)=\frac{\partial}{\partial t}\zeta(t,x),\ \ \ \ \zeta(t,x)=\sum_{j=1}^{\infty} b_j\beta_j(t)e_j(x),\ \ \ \ t\geq0
\end{equation}
where $\{e_j\}$ is a divergence-free orthonormal basis in $H$, $b_j$ are  constants satisfying \eqref{eq: intro: energy}-\eqref{eq: intro: enstrophyavg}, and $\{\beta_j\}$ is a sequence of independent standard Brownian motions. 
Moreover, the Brownian motions $\{\beta_j\}$ are defined on a complete probability space $(\Omega,\mathcal{F},\mathbb{P})$ with a filtration $\mathcal{G}_t$, $t\geq0$, and the $\sigma-$algebras $\mathcal{G}_t$ are completed with respect to $(\mathcal{F},\mathbb{P})$, that is, $\mathcal{G}_t$ contains all $\mathbb{P}-$null sets $A\in \mathcal{F}$.

Under these assumptions, we have existence and uniqueness of the solution of \eqref{Problemleray} in terms of the following definition (see \cite{GigliolaAmiraliYuriWP} for the details).
\begin{definition}\label{solutiondef}
An $H-$valued random process $u(t)$, $t\geq0$, is called a solution for \eqref{Problemleray} if:
\begin{itemize}
    \item [1)] The process $u(t)$ is adapted to the filtration $\mathcal{G}_t$ (cf. \eqref{def:stochasticforce}), and its almost every trajectory belongs to the space
    \begin{equation*}
        \mathcal{X}=C\left(\mathbb{R}_+; H\right)\cap L^2_{loc}\left( \mathbb{R}_+;V\right). 
    \end{equation*}
    \item[2)] Identity \eqref{Problemleray} holds in the sense that, with probability one,
    \begin{equation}\label{solution}
        u(t)+\int_0^t 
        \left(\nu Au+B(u)+\alpha u+Fu\right)ds=u(0)+\zeta(t), \ \ \ \ t\geq0,
    \end{equation}
    where the equality holds in the space $H^{-1}$.
\end{itemize}
\end{definition}
\begin{remark}
   1) In \cite{GigliolaAmiraliYuriWP} we proved that \eqref{Problem} admits a solution in terms of Definition \eqref{solutiondef}. Then, we can directly apply \cite[Corollary 2.4.11.]{kuk} concerning the regularity of the solution. In particular, the Corollary states that both for periodic and Dirichlet boundary conditions we have the following estimates,
\begin{equation}
    \mathbb{E}\norm{u}^r_{L_t^\infty L^2_x}< \infty, \ \ \ \mathbb{E}\norm{\nabla u}^2_{L^2_{t,x}}< \infty, \ \ \ \forall r\in[1,\infty).
\end{equation} 
2) As stated in \cite[Corollary 2.2]{GigliolaAmiraliYuriWP}, we have a similar well-posedness result for \eqref{vorticityequation}.
\end{remark}
From here on we assume that $u$ is a statistically stationary solution of \eqref{Problemleray} and we denote by $\mathbb{E}$ the stationary measure associated with $u$.

We study system \eqref{Problem} by assuming an uniform bound in $\alpha,\nu$ on the expectation of the vorticity as follows 
\begin{equation}\label{uniform bound on vorticity}
    \mathbb{E}\norm{\omega}^2_{L^2}\leq c<\infty.
\end{equation}
This means that the enstrophy associated with the vorticity does not blow up as dissipation vanishes. 
The above uniform bounds lead to stronger regularity properties of solutions, necessary for proving the convergence results in the next Theorem. In fact, we need to recover smoothness since the presence of the Coriolis force in the $\beta$-plane approximation reduces regularity making the system strongly anisotropic. 

We note that \eqref{uniform bound on vorticity} implies the more general assumption called weak anomalous dissipation (WAD) regime \cite[Sec. 1.2]{bed3D}, \cite[Sec. 1.2]{bed2D} which is inspired by \cite{eyink,Kupiainen}.

Under these assumptions, we prove the following theorem,
\begin{theorem}\label{thm:enstrophycascade}
Suppose that $L=L(\alpha)<\infty$ and $b=b(\alpha)<\infty$ are a continuous monotone decreasing functions such that $\lim_{\alpha\to 0}L=\lim_{\alpha\to 0}b=\infty$ and suppose that $a=a(\alpha)>-\infty$ is a continuous monotone increasing function such that $\lim_{\alpha\to 0}a=-\infty$. Let $\{u\}_{\nu,\alpha>0}$ be a sequence of statistically stationary solutions of \eqref{Problemleray} such that \eqref{uniform bound on vorticity} holds. Then there exists $l_\nu\in(0,1)$ satisfying $\lim_{\nu\to0}l_{\nu}=0$ such that 
\begin{align}
    &\lim_{l_I\to0}\limsup_{\nu,\alpha\to 0}\sup_{l\in[l_\nu,l_I]}\left|\frac{1}{l}\mathbb{E}\fint_\mathbb{S}\fint_{\mathbb{T}\cross I}\left|\delta_{ln}\omega\right|^2\delta_{ln}u \cdot n dxdn+2\eta\right|=0\label{directcascade3.1a}\\
    &\lim_{l_I\to 0}\limsup_{\nu,\alpha\to0}\sup_{l\in[l_{\nu},l_I]}\left|\frac{1}{l^3}\mathbb{E}\fint_\mathbb{S}\fint_{\mathbb{T}\times I}|\delta_{ln}u|^2\delta_{ln}u\cdot n\ dxdn-\frac{1}{4}\eta\right|=0\label{directcascade3.1b}
\end{align}
In particular, it suffices to choose $l_{\nu}\to0$ satisfying 
\begin{equation}\label{conditionlv}
   \nu^{1/2}=o_{\nu\to 0}(l_\nu).
\end{equation}
\end{theorem}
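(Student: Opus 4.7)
The plan is to derive a modified Karman-Howarth-Monin identity for the stationary two-point correlators $\mathbb{E}\fint u(x)\cdot u(x+r)\,dx$ and $\mathbb{E}\fint \omega(x)\omega(x+r)\,dx$, isolate the third-order flux terms appearing on the left-hand sides of \eqref{directcascade3.1a}-\eqref{directcascade3.1b}, and then show that in the iterated limit $\nu,\alpha\to 0$ followed by $l_I\to 0$ all other terms either reproduce the constants $-2\eta$, $\tfrac14\eta$ or are negligible. First I would apply the It\^o formula (as justified by Definition \ref{solutiondef} and \cite[Corollary 2.4.11]{kuk}) to the two correlators above and use stationarity to kill the time derivative. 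This produces identities of the schematic form
\begin{equation*}
\nabla_r\cdot \Phi(r)=\mathcal{D}_\nu(r)-\mathcal{D}_\alpha(r)+\mathcal{C}(r)+\varTheta(r),
\end{equation*}
where $\Phi$ is the flux whose spherical average at $r=ln$ is exactly the third-order structure function appearing in \eqref{directcascade3.1a} or \eqref{directcascade3.1b}, $\mathcal{D}_\nu,\mathcal{D}_\alpha$ are the viscous and drag dissipation contributions, $\mathcal{C}$ is the two-point noise covariance (computed from \eqref{eq: intro: energy}-\eqref{eq: intro: enstrophyavg}), and $\varTheta$ is the genuinely new Coriolis contribution.

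Next I would dispatch the classical terms. For $\mathcal{D}_\nu$ the enstrophy balance \eqref{enstrophy balance} combined with \eqref{uniform bound on vorticity} gives $\nu\mathbb{E}\|\nabla\omega\|_{L^2}^2\leq\eta$, which together with the scaling \eqref{conditionlv} shows that $\mathcal{D}_\nu$ is $o(l)$ (respectively $o(l^3)$) uniformly in $l\in[l_\nu,l_I]$ as $\nu\to 0$. For $\mathcal{D}_\alpha$ the bounds $\alpha\mathbb{E}\|\omega\|_{L^2}^2\leq\eta$ and $\alpha\mathbb{E}\|u\|_{L^2}^2\leq\varepsilon$ from \eqref{energy balance}-\eqref{enstrophy balance} yield $\mathcal{D}_\alpha\to 0$ as $\alpha\to 0$ after scaling in $l$. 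Averaging $\mathcal{C}(ln)$ over $n\in\mathbb{S}$ and Taylor expanding in $l$ produces the target constants $-2\eta$ and $\tfrac14\eta$ plus an error controlled by the finiteness assumptions \eqref{eq: intro: energy}-\eqref{eq: intro: enstrophyavg} that vanishes as $l_I\to 0$; this part of the argument follows \cite{bed2D} closely, modulo the change of domain to the channel $\mathbb{T}_{x^1}\times I$, which only requires that the basis $\{e_j\}$ be compatible with the no-slip boundary conditions in $x^2$.

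The core of the proof is the control of $\varTheta$. Since the flow is two-dimensional incompressible, the constant part $f_0$ contributes $\nabla\times(f_0 u^\perp)=f_0\nabla\cdot u=0$ to \eqref{vorticityequation}, so only the $\beta x^2$ piece survives: in the vorticity KHM the new term is proportional to $\beta\,\mathbb{E}\fint x^2\,u^2(x)\,\omega(x+r)\,dx$, and an analogous expression arises in the velocity-only KHM. My plan is to split the prefactor $x^2$ into the symmetric pair $\tfrac12(x^2+(x+r)^2)$ plus $-\tfrac12 r^2$, so that after the translation-invariance cancellation only a contribution linear in the shift $r=ln$ multiplying a bounded correlator remains; then I would Taylor expand $\varTheta(ln)$ at $l=0$. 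The zeroth and first order terms vanish after averaging against $n\in\mathbb{S}$ because they involve odd spherical harmonics of $n$, while the remaining quadratic and higher order terms are bounded by $\beta\,\mathbb{E}\|\omega\|_{L^2}^2$ and, in the velocity case, by $\beta\,\mathbb{E}\|u\|_{L^2}^2$ via the invariant-measure regularity established in \cite{GigliolaAmiraliYuriWP}. The outcome is $\fint_\mathbb{S}\varTheta(ln)\,dn=o(l)$ (respectively $o(l^3)$) uniformly in $\nu,\alpha$ under \eqref{uniform bound on vorticity}, and vanishing with $l_I$.

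Once the four pieces are assembled, one takes $\nu,\alpha\to 0$ first (killing $\mathcal{D}_\nu$ and $\mathcal{D}_\alpha$ and enforcing the range $l\in[l_\nu,l_I]$), and then $l_I\to 0$ (extracting the constants from $\mathcal{C}$ and closing the Coriolis estimate), which yields \eqref{directcascade3.1a}-\eqref{directcascade3.1b}. The main obstacle, as expected, is the third paragraph: the Taylor expansion and the symmetry cancellations for $\varTheta$ rely on regularity of the stationary measure beyond what is classical in the isotropic periodic setting, and it is precisely the anisotropic regularity proved in \cite{GigliolaAmiraliYuriWP} that makes the argument go through.
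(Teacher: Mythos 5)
Your overall architecture (modified KHM identities from It\^o's formula and stationarity, followed by a term-by-term analysis of the viscous, drag, noise and Coriolis contributions) matches the paper's, and your treatment of the viscous term and of the vorticity-side noise term is essentially right. But two of your key claims fail as stated. First, in the velocity case the drag contribution does \emph{not} vanish: after spherical averaging and integrating the KHM twice it reads $\frac{4\alpha}{l^4}\int_0^l r\overline{\Gamma}(r)\,dr=\frac{2\alpha\mathbb{E}\norm{u}_{L^2}^2}{l^2}+O\big(\alpha\,\mathbb{E}\norm{\omega}^2_{L^2}\big)$, and by the energy balance \eqref{energy balance} the first piece converges to $\frac{2\varepsilon}{l^2}$, which blows up over the range $l\in[l_\nu,l_I]$. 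Likewise the noise covariance does not produce ``$\tfrac14\eta$ plus a vanishing error'': its leading term is also $\frac{2\varepsilon}{l^2}$, since $\overline{a}(0)=\varepsilon$. The statement \eqref{directcascade3.1b} only holds because these two singular $\frac{2\varepsilon}{l^2}$ parts cancel exactly, using $\alpha\mathbb{E}\norm{u}^2_{L^2}=\varepsilon-\nu\mathbb{E}\norm{\nabla u}^2_{L^2}$ together with \eqref{uniform bound on vorticity} and \eqref{conditionlv} to control the remainder $\nu\mathbb{E}\norm{\omega}^2_{L^2}/l^2$; a bookkeeping that sends the drag term to zero and reads the constant off the noise term alone cannot close.

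Second, and more seriously for the genuinely new part of the argument, the low-order cancellation of the Coriolis term is not a parity effect. After your (correct) symmetrization the velocity-side term is $\overline{\varTheta}(l)=\beta l\,\mathbb{E}\fint_{\mathbb{S}}\fint n^2\big(u^2(x)u^1(x+ln)-u^1(x)u^2(x+ln)\big)dx\,dn$, and to beat the $l^{-4}$ normalization you must show the inner correlator is $O(l^2)$. Its first $l$-derivative at $l=0$ contains terms weighted by $(n^2)^2$, whose spherical average is $\tfrac12$, not $0$; were that derivative nonzero you would get $\overline{\varTheta}(l)\sim c\,l^2$ and hence a \emph{finite, nonzero} Coriolis correction to the constant $\tfrac14\eta$. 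The same issue appears at order zero in the vorticity case, where $\overline{\mathfrak{Q}}(0)=\beta\,\mathbb{E}\fint u^2\omega\,dx$ carries no $n$-dependence at all. The actual cancellations come from incompressibility combined with integration by parts (e.g.\ $\mathbb{E}\fint u^2\partial_2u^1\,dx=-\mathbb{E}\fint\partial_2u^2\,u^1\,dx=\mathbb{E}\fint\partial_1u^1\,u^1\,dx=0$), and the resulting bound on the second derivative requires $\mathbb{E}\norm{\nabla u}^2_{L^2}=\mathbb{E}\norm{\omega}^2_{L^2}$, i.e.\ precisely \eqref{uniform bound on vorticity}, not merely the energy bound $\beta\,\mathbb{E}\norm{u}^2_{L^2}$ you invoke. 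Without these identities the central conclusion --- that the Coriolis force leaves the small-scale flux constants unchanged --- is not established.
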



\begin{remark}
    We can consider in \eqref{Problemleray} a more generalized linear Ekman-type damping $-\alpha(-\Delta)^{2j}u$ with $j\geq0$ instead of $-\alpha u$ as in \cite{bed2D}. However, this does not influence the results. Then we assume $j=0$ for notation simplicity.
    A similar argument holds if we replace viscosity $\nu\Delta$ with hyperviscosity $-\nu(-\Delta)^j$ with $j>1$.
\end{remark}

\section{Modified Velocity KHM relations}\label{sectionKHM}

In this section, we derive the Modified Velocity KHM relations initially in terms of spherically averaged coordinates. The new term is denoted by $\overline{\varTheta}$.
Since we are not in a periodic space along $x^2$, it is first necessary to extend the velocity $u$ to the space $\mathbb{T}\cross \mathbb{R}$. We note that this extension preserves the regularity properties of the solution $u$ (see Remark \ref{reg ext}).

\subsection{Solution extension}

Let $u$ be a statistically stationary solution of \eqref{Problemleray} in the sense of Definition \ref{solutiondef}. Since $u\in V=\overline{D_\sigma(\mathbb{T}\cross (a,b))}^{H^1_0(\mathbb{T}\cross (a,b))}$ there exist a sequence of smooth function $\{u_n\}_n\in D_\sigma$ such that $u_n\to u$ for $n\to \infty$. Then, we define for each $n\in \mathbb{N}$,
\begin{equation}
    \widetilde{u}_n(x)=\left\{
    \begin{array}{rl}
    u_n, &\text{ if } x^2\in I;\\
    0, &\text{ otherwise.}
\end{array}
\right.
\end{equation}
Hence, $\{\widetilde{u}\}_n$ is a sequence of smooth compactly supported functions that converges in $H^1$. Its limit, that we denote by $\widetilde{u}\in H^1_0(\mathbb{T}\cross\mathbb{R})$ is the extension of $u$.

Similarly, we define the extensions $\widetilde{p}$ and $\widetilde{e_j}$.

\begin{remark}(Regularity of the extension)\label{reg ext}
\begin{itemize}
    \item  Since $\div u=0$, then $\div\widetilde{u}=0$. In fact, for any test function $\phi\in C^\infty_c$ we have 
\begin{equation}
    \langle\div\widetilde{u},\phi\rangle_{L^2(\mathbb{T}\cross\mathbb{R})}=-\langle\widetilde{u},\grad\phi\rangle_{L^2(\mathbb{T}\cross\mathbb{R})}=-\langle u,\grad\phi\rangle_{L^2(\mathbb{T}\cross I)}=\langle \div u,\phi\rangle_{L^2(\mathbb{T}\cross I)}=0.
\end{equation}
  \item $\widetilde{u}$ is a solution in the sense of Definition \ref{solutiondef} of the following problem,
\begin{equation}\label{Problemleraytilda}\left\{
    \begin{array}{rl}
     \partial_t \widetilde{u}+\nu A\widetilde{u}+B(\widetilde{u})+\alpha \widetilde{u}+F\widetilde{u}&=\widetilde{\varphi}; \\
     \nabla \cdot \widetilde{u}&=0;\\
     \widetilde{u}(t,0,x^2)&=\widetilde{u}(t,L,x^2);\\
     \widetilde{u}(0)&=\widetilde{u}_0\in H.
\end{array}
\right.
\end{equation}
In fact since $\textit{supp}(\widetilde{u})\subset \mathbb{T}\cross I$, for any $\phi\in C^\infty_c(\mathbb{T\cross R})$ we have
\begin{align*}
    &\langle \widetilde{u},\phi \rangle_{\mathbb{T\cross R}}+\int_0^t  \langle \nu A\widetilde{u}+B(\widetilde{u})+\alpha\widetilde{u}+F\widetilde{u},\phi \rangle_{\mathbb{T\cross R}}ds- \langle \widetilde{u(0)}+\widetilde{\zeta}(t),\phi \rangle_{\mathbb{T\cross R}}\\
    =&\langle u,\phi \rangle_{\mathbb{T\cross } I}+\int_0^t  \langle \alpha u+Fu,\phi \rangle_{\mathbb{T\cross }I}ds+\int_0^t  \nu\langle  u,\Delta\Pi\phi \rangle_{\mathbb{T\cross }I}ds\\
    &-\int_0^t  \langle  u\otimes u,\nabla\Pi\phi \rangle_{\mathbb{T\cross }I}ds- \langle u(0)+\zeta(t),\phi \rangle_{\mathbb{T\cross }I}=0
\end{align*}
where we use $\div(\widetilde{
u}\otimes\widetilde{u})=(\widetilde{u}\cdot\nabla)\widetilde{u}$, the orthogonal property of the Leray projection and \eqref{solution}. As mentioned before, solving \eqref{Problemleraytilda} is equivalent to solve
\begin{equation}\label{Problem1}
\left\{\begin{array}{rl}
    \partial_t \widetilde{u}+\div(\widetilde{u}\otimes\widetilde{u})+f \widetilde{u}^\perp &=\nu\Delta \widetilde{u}-\alpha \widetilde{u}-\nabla\widetilde{p}+\widetilde{\varphi}; \\
     \nabla \cdot \widetilde{u}&=0;\\
     \widetilde{u}(t,0,x^2)&=\widetilde{u}(t,L,x^2);\\
     \widetilde{u}(0)&=\widetilde{u}_0\in H.
\end{array}
\right.
\end{equation}
\end{itemize}

\end{remark}

\subsection{Weak Velocity KHM relation}
We define the extended two-point correlation for the velocity, Coriolis force, pressure and noise as
\begin{align}
    \Gamma(y)&:=\mathbb{E}\fint_{\mathbb{T}\times \mathbb{R}}\widetilde{u}(x)\otimes \widetilde{u}(x+y)dx,\\
    \varTheta(y)&:=\frac{1}{2}\mathbb{E}\fint_{\mathbb{T}\times \mathbb{R}}\widetilde{u}(x)\otimes (f\widetilde{u^\perp})(x+y)+(f\widetilde{u^\perp})(x)\otimes \widetilde{u}(x+y)dx,\\
     P(y)&:=\mathbb{E}\left(\nabla_y\int_{\mathbb{T}\cross \mathbb{R}} \widetilde p(x)\widetilde u(x+y)dx-\nabla^\top_y\int_{\mathbb{T}\cross \mathbb{R}} \widetilde u(x)\widetilde p(x+y)dx\right)\\
    a(y)&:=\frac{1}{2}\sum_{j}b_j^2\fint_{\mathbb{T}\times \mathbb{R}}\widetilde{e_j}(x)\otimes \widetilde{e_j}(x+y)dx,
\end{align}
for any $y\in \mathbb{R}^2$, where $f(x)=f_0+\beta x^2$, and the corresponding regularized flux structure function, "third order structure-function", defined for each $j=1,2$ by
\begin{equation}\label{dj definition}
    D^j(y)=\mathbb{E}\fint_{\mathbb{T}\times \mathbb{R}} (\delta_y \widetilde{u}(x)\otimes\delta_y \widetilde{u}(x))\delta_y \widetilde{u}^j(x) dx,
\end{equation}
where $\delta_y \widetilde{u}(x):=\widetilde{u}(x+y)-\widetilde{u}(x)$ and with an abuse of notation $\fint=\frac{1}{|\mathbb{T}\times I|}\int$

\begin{remark}
We cannot directly define 
\begin{equation}
    \mathbb{E}\fint_{\mathbb{T}\times I}u(x)\otimes u(x+y)dx
\end{equation}
because $(x+y)\not\in \mathbb{T}\cross I$ for any $y\in\mathbb{R}^2$. Due to the solution extension constructed in the previous section, $\Gamma(y)$ (and similarly for the other quantities) is well-defined. Moreover,
    \begin{equation}
\mathbb{E}\fint_{\mathbb{T}\times \mathbb{R}}\widetilde{u}(x)\otimes \widetilde{u}(x+y)dx=\mathbb{E}\fint_{\mathbb{T}\times I}\widetilde{u}(x)\otimes \widetilde{u}(x+y)dx,
    \end{equation}
since $supp(\widetilde{u})\subset I$.
\end{remark}

\begin{remark}\label{rmk1}
We observe that the quantities defined above are uniformly bounded and at least $C^3$. This follows from a regularization argument and an application of Ascoli-Arzelà Theorem.
\end{remark}

\begin{notat}
    Given any two rank tensors A and B we will denote the Frobenius product by $A:B=\sum_{i,j}A_{ij}B_{ij}$ and the norm $|A|=\sqrt{A:A}$.
\end{notat}
\begin{proposition}\label{weakvelocitykhmprop} (Weak Velocity KHM relation). Let $\widetilde{u}$ be a statistically stationary solution to \eqref{Problem1} and let $\phi(y)=(\phi_{ij}(y))^2_{ij=1}$ be a smooth compactly supported test function of the form
\begin{equation}\label{testfunction}
    \phi(y)=\Phi(|y|)Id+\Psi(|y|)\hat{y}\otimes\hat{y}, \ \ \ \hat{y}=\frac{y}{|y|},
\end{equation}
where $\Phi$ and $\Psi$ are smooth and compactly supported on $(0,\infty)$.
Then, the following identity holds
\begin{align}\label{KHM}
\sum^2_{j=1}\int_{\mathbb{R}^2}\partial_j\phi(y):D^j(y)dy&=4\nu\int_{\mathbb{R}^2}\Delta\phi(y):\Gamma(y)dy-4\alpha\int_{\mathbb{R}^2}\phi(y):\Gamma(y)dy\notag\\
&-4\int_{\mathbb{R}^2}\phi(y):\varTheta(y)dy
+4\int_{\mathbb{R}^2}\phi(y):a(y)dy.
\end{align}
\end{proposition}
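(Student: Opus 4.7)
The plan is to derive \eqref{KHM} by applying Itô's formula to the shifted two-point functional, symmetrizing in $y \leftrightarrow -y$, integrating against $\phi$, and exploiting the rotational ansatz \eqref{testfunction} to cancel the pressure contribution.

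First I would fix $y \in \mathbb{R}^2$ and consider the tensor-valued functional $F_y(v) := \fint_{\mathbb{T}\times\mathbb{R}} v(x)\otimes v(x+y)\,dx$. Applying Itô's formula to $F_y(\widetilde u(t))$ using the extended system \eqref{Problem1}, taking expectation, and using stationarity to kill $\tfrac{d}{dt}\mathbb{E}F_y(\widetilde u(t))$ produces a pointwise-in-$y$ tensor identity whose deterministic side contains the convective, viscous, drag, Coriolis, and pressure contributions, while the Itô correction from the noise $\widetilde\zeta$ contributes precisely $2a(y)$ via the orthonormality of $\{\widetilde{e_j}\}$ in \eqref{def:stochasticforce}. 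Symmetrizing the resulting identity under $y \mapsto -y$ is what ultimately produces the factor $4$ on the right-hand side of \eqref{KHM}.

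Next I would pair the pointwise identity with the test tensor $\phi(y)$ and integrate over $\mathbb{R}^2$. Two integrations by parts in $y$ convert the viscous contribution to $4\nu\int\Delta\phi:\Gamma\,dy$; the drag term yields $-4\alpha\int\phi:\Gamma\,dy$ directly, and the Coriolis terms reorganize into $-4\int\phi:\varTheta\,dy$ by the very definition of $\varTheta(y)$ together with stationarity. For the convective term, I would exchange $\nabla_x$ with $\nabla_y$ on the shifted factor, invoke $\div\widetilde u = 0$, and use the algebraic rearrangement that rewrites $\delta_y\widetilde u \otimes \delta_y\widetilde u \cdot \delta_y\widetilde u$ in terms of products of translates of $\widetilde u$ at $x$ and $x+y$, up to terms that vanish under symmetrization and the $x$-integration against the divergence-free velocity. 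This converts the cubic expression into $\sum_j \partial_{y_j} D^j(y)$; throwing the derivative back onto $\phi$ yields the left-hand side $\sum_j \int \partial_j\phi : D^j\,dy$.

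The most delicate step is the cancellation of the pressure contribution. Here the ansatz \eqref{testfunction} is crucial: the pressure assembles into a tensor $P(y) = \nabla_y \otimes G(y) - G(y) \otimes \nabla_y^\top$ with $G(y) = \mathbb{E}\fint \widetilde p(x)\widetilde u(x+y)\,dx$; after integration by parts in $y$ the remaining contraction involves $\nabla_y \cdot \phi$, which for $\phi$ of the form $\Phi(|y|)Id + \Psi(|y|)\hat y \otimes \hat y$ is purely radial, and its pairing with $G$ vanishes after a further integration by parts in $x$ using $\div_x\widetilde u = 0$. The main obstacle is the rigorous justification of Itô's formula for the $H^1$-valued process $\widetilde u$, which I would handle via Galerkin approximation: the formula is applied to finite-dimensional projections of $F_y$ and the limit is taken using the a priori bounds of Remark \ref{reg ext}, the uniform estimate \eqref{uniform bound on vorticity}, and the $C^3$ smoothness of the two-point correlations noted in Remark \ref{rmk1}, the latter allowing interchange of expectation, spatial integration, and $y$-differentiation. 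A secondary technicality is checking that $\widetilde u$ satisfies \eqref{Problem1} in the correct distributional sense, which is precisely the content of the extension construction in Remark \ref{reg ext}.
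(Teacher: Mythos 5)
Your proposal follows essentially the same route as the paper's proof: a stochastic product rule for the two-point correlation $\fint \widetilde{u}(x)\otimes\widetilde{u}(x+y)\,dx$, stationarity to remove the time derivative, pairing with the radial test tensor, identification of the It\^{o} correction with $a(y)$ and of the Coriolis contribution with $\varTheta(y)$, and cancellation of the pressure via the radial form of $\nabla\cdot\phi=\eta(|y|)\hat{y}$ combined with $\div\widetilde{u}=0$ (in the paper this is executed as a divergence theorem in $y$ over balls rather than an integration by parts in $x$, but the mechanism is the same). The only substantive difference is that the paper justifies the It\^{o} computation by mollifying $\widetilde{u}$ in space and passing to the limit $\varepsilon\to 0$ term by term, rather than by a Galerkin scheme; this is a technical variant, not a different argument.
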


\begin{proof}
Let us consider the standard mollifier
and convolving $\widetilde{u}$ with it, i.e.,
\begin{equation}\label{moll u}
    \widetilde{u}^\varepsilon(x)=\rho_\varepsilon* \widetilde{u}=\int \varepsilon^{-2}\rho\left(\frac{y}{\varepsilon}\right)\widetilde{u}(x-y)dy.
\end{equation}
Then, the mollifier stochastic equation for $\widetilde{u}^\varepsilon$ as
\begin{align}\label{stochasticeq}
    d\widetilde{u}^\varepsilon(t,x)+\nabla \cdot (\widetilde{u} \otimes \widetilde{u})^\varepsilon(t,x)dt-\nu \Delta \widetilde{u}^\varepsilon(t,x) dt
    +\alpha \widetilde{u}^\varepsilon(t,x) dt&+ (f\widetilde{u^\perp})^\varepsilon (t,x)dt\\
    &+\nabla \widetilde{p}^\varepsilon(t,x)dt=d\widetilde{\zeta^\varepsilon}(t,x)\notag
\end{align}
where we used the identity 
\begin{equation*}
    \nabla \cdot (\widetilde{u}\otimes \widetilde{u})=(\nabla \cdot \widetilde{u})\widetilde{u}+(\widetilde{u}\cdot \nabla)\widetilde{u}=(\widetilde{u}\cdot \nabla)\widetilde{u}.
\end{equation*}
We can see equation \eqref{stochasticeq} as a finite-dimensional SDE.
Using the abstract Ito's formula in Hilbert spaces (cf. \cite[Sec. 7.7]{kuk}) one can obtain the following stochastic products rule (cf. \cite[Proposition 3.1]{bed3D} for similar computation).  
 Let $y\in\mathbb{T}\times \mathbb{R}$, then the evolution equation of $\widetilde{u}^\varepsilon(t,x)\otimes \widetilde{u}^\varepsilon(t,x+y)$ satisfies the stochastic product rule 
\begin{align}\label{productrule}
    d\left(\widetilde{u}^\varepsilon(t,x)\otimes \widetilde{u}^\varepsilon(t,x+y)\right)&=d\widetilde{u}^\varepsilon(t,x)\otimes \widetilde{u}^\varepsilon(t,x+y)+\widetilde{u}^\varepsilon(t,x)\otimes d\widetilde{u}^\varepsilon(t,x+y)\notag\\
    &+\left[\widetilde{u}^\varepsilon(\cdot,x), \widetilde{u}^\varepsilon(\cdot,x+y)\right](t),
\end{align}
where $\left[\widetilde{u}^\varepsilon(\cdot,x), \widetilde{u}^\varepsilon(\cdot,x+y)\right](t)$ is the cross variation of $\widetilde{u}^\varepsilon(\cdot,x)$ and $\widetilde{u}^\varepsilon(\cdot,x+y)$ and is given by
\begin{align*}
    \left[\widetilde{u}^\varepsilon(\cdot,x), \widetilde{u}^\varepsilon(\cdot,x+y)\right](t)&=\left[\widetilde{\zeta}^\varepsilon(\cdot,x),\widetilde{\zeta}^\varepsilon(\cdot,x+y)\right](t)\\
    &=\sum_{i,j}b_i b_j\left(\widetilde{e_i}^\varepsilon\otimes \widetilde{e_j}^\varepsilon(x+y)\right)\left[\beta_i,\beta_j\right](t)\\
    &=tC^\varepsilon(x,x+y),
\end{align*}
with,
\begin{equation*}
    C^\varepsilon(x,x+y)=\sum_{j=1}^\infty b_j^2 \widetilde{e_j}^\varepsilon(x)\otimes \widetilde{e^\varepsilon}_j(x+y). 
\end{equation*}
We observe that 
\begin{equation}
    \nabla \cdot (\widetilde{u} \otimes \widetilde{u})^\varepsilon(x,t)\otimes\widetilde{u}^\varepsilon(t,x+y)=\sum_{j=1}^2 \partial_{x^j}(\widetilde{u}^j \widetilde{u})^\varepsilon(t,x)\otimes \widetilde{u}^\varepsilon(t,x+y)
\end{equation}
Then, integrating \eqref{productrule} in $x$, using \eqref{stochasticeq} and due some integration by parts we obtain the following SDE
\begin{align*}
    &d\bigg(\fint_{\mathbb{T}\times \mathbb{R}}\widetilde{u}^\varepsilon(t,x)\otimes \widetilde{u}^\varepsilon(t,x+y)dx\bigg)\\
    &=\sum_{j=1}^2\left(\partial_{y^j}\fint_{\mathbb{T}\cross \mathbb{R}}(\widetilde{u}^j \widetilde{u})^\varepsilon(t,x)\otimes \widetilde{u}^\varepsilon (t,x+y)-\widetilde{u}^\varepsilon(t,x)\otimes(\widetilde{u}^j \widetilde{u})^\varepsilon(t,x+y)dx\right)dt\\
    &+2\nu\left(\Delta_y\fint_{\mathbb{T}\times \mathbb{R}}\widetilde{u}^\varepsilon(t,x)\otimes \widetilde{u}^\varepsilon(t,x+y)dx\right)dt-2\alpha\left(\fint_{\mathbb{T}\times \mathbb{R}}\widetilde{u}^\varepsilon(t,x)\otimes \widetilde{u}^\varepsilon(t,x+y)dx\right)dt\\
    &-\left(\fint_{\mathbb{T}\times \mathbb{R}}(f\widetilde{u^\perp})^\varepsilon(t,x)\otimes \widetilde{u}^\varepsilon(t,x+y)+\widetilde{u}^\varepsilon(t,x)\otimes (f\widetilde{u^\perp})^\varepsilon(t,x+y)dx\right)dt\\
    &+\left(\fint_{\mathbb{T}\times \mathbb{R}}C^\varepsilon(x,x+y)dx\right)dt\\
    &+\left(\nabla_y\int_{\mathbb{T}\cross \mathbb{R}} \widetilde{p}^\varepsilon(t,x)\widetilde{u}^\varepsilon(t,x+y)dx-\nabla^\top_y\int_{\mathbb{T}\cross \mathbb{R}} \widetilde{u}^\varepsilon(t,x)\widetilde{p}^\varepsilon(t,x+y)dx\right)dt\\
    &+\sum_{j=1}^\infty\left(\fint_{\mathbb{T}\times \mathbb{R}}b_j\left(\widetilde{e_j}^\varepsilon(t,x)\otimes \widetilde{u}^\varepsilon(t,x+y)+\widetilde{u}^\varepsilon(t,x)\otimes \widetilde{e_j}^\varepsilon(t,x+y)\right)dx\right)d\beta_j.
\end{align*}
We denote by $\nabla^\top_y$ the transpose of $\nabla_y$, that is, $(\nabla^\top_y u)_{ij}=(\nabla_y u)_{ji}=\partial_{y_j}u^i$.
Integrating in the time interval $[0,T]$, taking expectation and pairing against the test function \eqref{testfunction} and integrating by parts gives 
\begin{align}\label{timedip}
&\int_{\mathbb{R}^2}\phi(y):\Gamma^\varepsilon(T,y)dy-\int_{\mathbb{R}^2}\phi(y):\Gamma^\varepsilon(0,y)dy\notag\\
=&-\frac{1}{2}\sum_{j=1}^2\int_0^T\int_{\mathbb{R}^2}\partial_j\phi(y):{D^j}^\varepsilon(t,y)dydt+2\nu\int_0^T\int_{\mathbb{R}^2}\Delta\phi(y):\Gamma^\varepsilon(t,y)dydt\notag\\
&-2\alpha\int_0^T\int_{\mathbb{R}^2}\phi(y):\Gamma^\varepsilon(t,y)dydt-2\int_0^T\int_{\mathbb{R}^2}\phi(y):\varTheta^\varepsilon(t,y)dydt\notag\\
&+\int_0^T\int_{\mathbb{R}^2}\phi(y):P^\varepsilon(t,y)dydt+2T\int_{\mathbb{R}^2}\phi(y):a^\varepsilon(y)dy
\end{align}
where we have defined the regularised quantities,
\begin{align*}
       \Gamma^\varepsilon(t,y)&:=\mathbb{E}\fint_{\mathbb{T}\times \mathbb{R}}\widetilde{u}^\varepsilon(t,x)\otimes \widetilde{u}^\varepsilon(t,x+y)dx,\\ \varTheta^\varepsilon(t,y)&:=\frac{1}{2}\mathbb{E}\fint_{\mathbb{T}\times \mathbb{R}}\widetilde{u}^\varepsilon(t,x)\otimes (f\widetilde{u^\perp})^\varepsilon(t,x+y)+(f\widetilde{u^\perp})^\varepsilon(t,x)\otimes \widetilde{u}^\varepsilon(t,x+y)dx,\\
    {D^i}^\varepsilon(t,y)&:=2\mathbb{E}\fint_{\mathbb{T}\times \mathbb{R}}\widetilde{(u^i u)}^\varepsilon(t,x)\otimes \widetilde{u}^\varepsilon(t,x+y)-\widetilde{u}^\varepsilon(t,x)\otimes\widetilde{(u^iu)}^\varepsilon(t,x+y)dx,\\
    P^\varepsilon(t,y)&:=\mathbb{E}\left(\nabla_y\fint_{\mathbb{T}\cross \mathbb{R}} \widetilde{p}^\varepsilon(t,x)\widetilde{u}^\varepsilon(t,x+y)dx-\nabla^\top_y\fint_{\mathbb{T}\cross \mathbb{R}} \widetilde{u}^\varepsilon(t,x)\widetilde{p}^\varepsilon(t,x+y)dx\right)\\
    a^\varepsilon(y)&:=\frac{1}{2}\fint_{\mathbb{T}\times \mathbb{R}}C^\varepsilon(x,x+y)dx.
\end{align*}
By stationarity we can write the above equation as
\begin{align}
&0=-\frac{1}{2}\sum_{j=1}^2\int_{\mathbb{R}^2}\partial_j\phi(y):{D^j}^\varepsilon(t,y)dy+2\nu\int_{\mathbb{R}^2}\Delta\phi(y):\Gamma^\varepsilon(y)dy\notag\\
&-2\alpha\int_{\mathbb{R}^2}\phi(y):\Gamma^\varepsilon(y)dy-2\int_{\mathbb{R}^2}\phi(y):\varTheta^\varepsilon(y)dy\notag\\
&+\int_{\mathbb{R}^2}\phi(y):P^\varepsilon(y)dy+2\int_{\mathbb{R}^2}\phi(y):a^\varepsilon(y)dy
\end{align}

Finally, we want to pass to the limit as $\varepsilon\to0$ in \eqref{timedip} to obtain \eqref{KHM}. 
By definition of the noise along with bounded convergence theorem, we have that $C^\varepsilon(x,x+y)\to C(x,x+y)$ converge locally uniformly in both $x$ and $y$ and therefore $a^\varepsilon(y)\to a(y)$. 

 We show that the contribution from the pressure $P^\varepsilon$ vanishes. Using the identity
\begin{align*}
    \nabla \cdot\phi(y)&=\nabla \cdot(\Phi(|y|)Id+\Psi(|y|)\hat{y}\otimes\hat{y}=\eta(|y|)\hat{y}
\end{align*}
where
\begin{align*}
    \eta(l)&=\Phi'(l)+\Psi'(l)+2l^{-1}\Psi(l),
\end{align*}
we have
\begin{align*}
    \int_{\mathbb{R}^2}\phi(y)&:P^\varepsilon(t,y)dy\\
    =&-\mathbb{E}\int\int \nabla \cdot \phi(y) \cdot \widetilde{p}^\varepsilon(t,x)(\widetilde{u}^\varepsilon(t,x+y)-\widetilde{u}^\varepsilon(t,x-y))dxdy\\
    =&-\mathbb{E}\int\int  \eta(|y|) \widetilde{p}^\varepsilon(t,x)(\widetilde{u}^\varepsilon(t,x+y)-\widetilde{u}^\varepsilon(t,x-y))\cdot\hat{y}dxdy\\
    =&-\mathbb{E}\int \widetilde{p}^\varepsilon(t,x)\left(\int_{\mathbb{R}^+}\eta(l)\left[\int_{|y|=l}(\widetilde{u}^\varepsilon(t,x+y)-\widetilde{u}^\varepsilon(t,x-y))\cdot\hat{y}dS(y)\right]dl\right)dx.
\end{align*}
By the divergence theorem, we conclude that 
\begin{equation*}
    \int_{|y|=l}(\widetilde{u}^\varepsilon(t,x+y)-\widetilde{u}^\varepsilon(t,x-y))\cdot\hat{y}dS(y)=2\int_{|y|\leq l}\nabla \cdot \widetilde{u}^\varepsilon(x+y)dy=0 
\end{equation*}
since $\widetilde{u}^\varepsilon$ is a divergence-free vector field.

Due to the properties of mollifiers we have almost everywhere in $\Omega\cross[0,T]$,
\begin{equation}\label{mollif property}
    \widetilde{u}^\varepsilon\to \widetilde{u} \text{ in }H(\mathbb{T}\cross \mathbb{R}).
\end{equation}
Then, for almost every $(\omega,t,y)\in \Omega\cross [0,T]\cross \mathbb{R}^2$
\begin{align}
    &\fint_{\mathbb{T}\cross\mathbb{R}} |\widetilde{u}^\varepsilon(x)\otimes\widetilde{u}^\varepsilon(x+y)-\widetilde{u}(x)\otimes\widetilde{u}(x+y)|dx\notag\\
    &=\fint_{\mathbb{T}\cross\mathbb{R}} |\widetilde{u}^\varepsilon(x)\otimes\widetilde{u}^\varepsilon(x+y)-\widetilde{u}^\varepsilon(x)\otimes\widetilde{u}(x+y)+\widetilde{u}^\varepsilon(x)\otimes\widetilde{u}(x+y)-\widetilde{u}(x)\otimes\widetilde{u}(x+y)|dx\notag\\
    &\leq 2\norm{\widetilde{u}}_{L^2_{\mathbb{T}\cross\mathbb{R}}}\norm{\widetilde{u}^\varepsilon-\widetilde{u}}_{L^2_{\mathbb{T}\cross\mathbb{R}}}
\end{align}
which vanishes as $\varepsilon\to 0$. This implies that 
\begin{equation}
    \widetilde{u}^\varepsilon(x)\otimes\widetilde{u}^\varepsilon(x+y)\to\widetilde{u}(x)\otimes\widetilde{u}(x+y)\ \ \ \text{in } L^1(\mathbb{T}\cross\mathbb{R}).
\end{equation}
Moreover, 
\begin{equation*}
\fint_{\mathbb{T}\cross\mathbb{R}}\widetilde{u}^\varepsilon(x)\otimes\widetilde{u}^\varepsilon(x+y)dx\leq \norm{u}^2_{L^2_{\mathbb{T}\cross I}}.
\end{equation*}
By dominated convergence theorem we conclude that for each $y\in\mathbb{R}^2$,
\begin{equation}
    \Gamma^\varepsilon(y)\to\Gamma(y).
\end{equation}
Finally, the uniform bound 
\begin{equation}\label{Gamma unif bounf}
    \left|\phi(y):\Gamma^\varepsilon(y)\right|\leq \mathbb{E}\norm{u}_{L^2_{\mathbb{T}\cross I}}^2<\infty
\end{equation}
and the dominated convergence theorem imply 
\begin{equation}
    \phi(y):\Gamma^\varepsilon(y)\to\phi(y):\Gamma(y) \ \ \ \text{in }L^1(\mathbb{R}^2). 
\end{equation}
Since we have these convergence properties, we can pass to the limit in all terms of \eqref{timedip} that involve $\Gamma^\varepsilon$. 

We proceed similarly for the other terms. 
We have 
\begin{align}
    &\fint_{\mathbb{T}\times \mathbb{R}}\left|(f\widetilde{u^\perp})^\varepsilon(t,x)\otimes \widetilde{u}^\varepsilon(t,x+y)-(f\widetilde{u^\perp})(t,x)\otimes \widetilde{u}(t,x+y)\right|dx\notag\\
    \leq&\fint_{\mathbb{T}\times \mathbb{R}} \left|((f\widetilde{u^\perp})^\varepsilon-(f\widetilde{u^\perp}))(t,x)\right|\otimes \left|\widetilde{u}^\varepsilon(t,x+y)\right|\notag\\
    &+\left|f\widetilde{(u^\perp)}(t,x)\right|\otimes \left|(\widetilde{u}^\varepsilon-\widetilde{u})(t,x+y)\right|dx\notag\\
    \leq&\norm{(f\widetilde{u^\perp})^\varepsilon-(f\widetilde{u^\perp})}_{L^2_{\mathbb{T}\cross\mathbb{R}}}\norm{\widetilde{u}^\varepsilon}_{L^2_{\mathbb{T}\cross\mathbb{R}}}+\norm{f\widetilde{(u^\perp)}}_{L^2_{\mathbb{T}\cross\mathbb{R}}}\norm{\widetilde{u}^\varepsilon-\widetilde{u}}_{L^2_{\mathbb{T}\cross\mathbb{R}}}\notag\\    
    \leq&c\left(\norm{(\widetilde{u^\perp})^\varepsilon-u^\perp}_{L^2_{\mathbb{T}\cross \mathbb{R}}}\norm{\widetilde{u}}_{L^2_{\mathbb{T}\cross \mathbb{R}}}+\norm{\widetilde{u^\perp}}_{L^2_{\mathbb{T}\cross \mathbb{R}}}\norm{\widetilde{u}^\varepsilon-\widetilde{u}}_{L^2_{\mathbb{T}\cross \mathbb{R}}}\right)
\end{align}
which vanishes since the norm is rotation invariant, \eqref{mollif property} and solution regularity.

This prove that almost everywhere in $(\omega,t,y)\in\Omega\cross[0,T]\cross \mathbb{R}^2$
\begin{equation}
    (f\widetilde{u^\perp})^\varepsilon(x)\otimes \widetilde{u}^\varepsilon(x+y)\to(f\widetilde{u^\perp})(x)\otimes \widetilde{u}(x+y)
\end{equation}
and 
\begin{equation}
    \widetilde{u}^\varepsilon(x)\otimes(f\widetilde{u^\perp})^\varepsilon (x+y)\to\widetilde{u}(x)\otimes(f\widetilde{u^\perp}) (x+y)
\end{equation}
converge in $L^1(\mathbb{T}\cross\mathbb{R})$.
Moreover, we have
\begin{equation}
    \fint_{\mathbb{T}\cross I} \widetilde{u}^\varepsilon(x)\otimes(f\widetilde{u^\perp})^\varepsilon (x+y) dx\lesssim \norm{u}^2_{L^2}.
\end{equation}
Then, the dominated convergence theorem implies for each $y\in \mathbb{R}^2$
\begin{equation}
    \varTheta^\varepsilon(y)\to\varTheta(y).
\end{equation}
Similar to \eqref{Gamma unif bounf} we have
\begin{equation}
    \left|\phi(y):\varTheta^\varepsilon(y)\right|\lesssim\mathbb{E}\norm{u}_{L^2_{\mathbb{T}\cross I}}^2<\infty.
\end{equation}
and by using the bounded convergence theorem, 
\begin{equation}\label{bounded conv thm}
    \phi(y):\varTheta^\varepsilon(y)\to\phi(y):\varTheta(y) \ \ \ \text{in }L^1( \mathbb{R}^2). 
\end{equation}
Then, we can pass to the limit in \eqref{timedip} for $\varTheta^\varepsilon$.

It remains to pass to the limit in the term involving ${D^j}^\varepsilon$.
As in the previous steps for almost every $(\omega,t,y)\in \Omega\cross[0,T]\cross \mathbb{R}^2$
\begin{align}\label{nonlinear term}
    &\fint_{\mathbb{T\cross I}}\left|\widetilde{(u^i u)}^\varepsilon(x)\otimes\widetilde{u}^\varepsilon(x+y)-\widetilde{u^i}\widetilde{u}(x)\otimes\widetilde{u}(x+y)\right|dx\notag\\
    \leq &\norm{\widetilde{u}}_{L^3_{\mathbb{T}\cross \mathbb{R}}}\norm{\widetilde{(u^iu)}^\varepsilon-\widetilde{u^i}\widetilde{u}}_{L^{3/2}_{\mathbb{T}\cross \mathbb{R}}}+\norm{\widetilde{u^i}}_{L^3_{\mathbb{T}\cross \mathbb{R}}}\norm{\widetilde{u}}_{L^3_{\mathbb{T}\cross \mathbb{R}}}\norm{\widetilde{u}-\widetilde{u}^\varepsilon}_{L^3_{\mathbb{T}\cross \mathbb{R}}}
\end{align}
Observing that $\widetilde{u}\in L^3(\mathbb{T}\cross \mathbb{R})$ (by Sobolev embedding theorem) due to the properties of mollifiers we have
\begin{equation}\label{l3 conv}
    \widetilde{u}^\varepsilon\to \widetilde{u} \ \ \ \text{ in }L^3(\mathbb{T\cross R})
\end{equation}
and since 
\begin{equation}
    \norm{\widetilde{u^iu}}_{L^{3/2}(\mathbb{T}\cross \mathbb{R})}\leq \norm{\widetilde{u}}^2_{L^{3}(\mathbb{T}\cross \mathbb{R})}< \infty
\end{equation}
we have
\begin{equation}\label{l3/2 conv}
    \norm{(\widetilde{u^iu})^\varepsilon-\widetilde{u^i}\widetilde{u}}_{L^{3/2}_{\mathbb{T}\cross \mathbb{R}}}\to 0.
\end{equation}
Then, by \eqref{nonlinear term}-\eqref{l3/2 conv} we derive 
\begin{equation*}
    \widetilde{(u^ju)^\varepsilon}(\omega,t,\cdot)\otimes \widetilde{u}^\varepsilon(\omega,t,\cdot+y)\to \widetilde{u}(\omega,t,\cdot)\otimes \widetilde{u}(\omega,t,\cdot+y) \widetilde{u^j}(\omega,t,\cdot)
\end{equation*}
and 
\begin{equation}\label{lim1}
    \widetilde{u}^\varepsilon(\omega,t,\cdot)\otimes \partial_{x^j}(\widetilde{u}^j \widetilde{u})^\varepsilon(\omega,t,\cdot+y)\to \widetilde{u}(\omega,t,\cdot)\otimes \widetilde{u}(\omega,t,\cdot+y) \widetilde{u^j}(\omega,t,\cdot+\varepsilon)
\end{equation}
in $L^1(\mathbb{T}\cross \mathbb{R})$.
Moreover, 
\begin{equation}
    \fint_{\mathbb{T\cross I}}\left|\widetilde{(u^i u)}^\varepsilon(x)\otimes\widetilde{u}^\varepsilon(x+y)\right|dx\lesssim \norm{\widetilde{u}}^3_{L^3_{\mathbb{T}\cross \mathbb{R}}}<\infty
\end{equation}
Then, the dominated convergence theorem tells us
\begin{equation}
    \mathbb{E}\fint_{\mathbb{T}\cross I}\widetilde{(u^i u)}^\varepsilon(x)\otimes\widetilde{u}^\varepsilon(x+y)dx\to \mathbb{E}\fint_{\mathbb{T}\cross I}\widetilde{u^i}\widetilde{u}(x)\otimes\widetilde{u}(x+y)dx
\end{equation}
and similarly for $\widetilde{u}^\varepsilon\otimes \partial_{x^j}(\widetilde{u}^j \widetilde{u})^\varepsilon$.

Due to Lemma 3.4 in \cite{bed3D} (which is still valid for how we defined the extension ) and using a change of variable, we can write the limit in terms of the third structure function 
\begin{align*}
   2\int_{\mathbb{R}^2}\partial_{h^j}\phi(y):\mathbb{E}\int_{\mathbb{T}\cross I}&\big(\widetilde{u}(\cdot,x)\otimes \widetilde{u}(\cdot, x+y)\big)\widetilde{u}^j(\cdot,x)\\
   -&\big(\widetilde{u}(\cdot,x)\otimes \widetilde{u}(\cdot,x+y)\big)\widetilde{u}^j(\cdot,x+y) dx dy=\int_{\mathbb{R}^2}\partial_{h^j}\phi(y):D^j(t,y)dy
\end{align*}
Finally, using the uniform bound 
\begin{equation*}
    \left|\partial_{h^j}\phi(y):{D^j}^\varepsilon(y)\right|\leq\left(\mathbb{E}\norm{u\otimes u}^2_{L^2}\right)^{1/2}\left(\mathbb{E}\norm{ u}^2_{L^2}\right)^{1/2}<\infty,
\end{equation*}
it follows from the dominated convergence theorem that
\begin{equation*}
    \partial_{h^j}\phi:{D^j}^\varepsilon\to\partial_{h^j}\phi:{D^j} \ \text{ in } L^1(\mathbb{R}^2)
\end{equation*}
and therefore we may pass the limit of the term ${D^j}^\varepsilon$ in \eqref{timedip}. 
\end{proof}

\begin{notat}
    Throughout the paper, we replace $\widetilde{u}$ by $u$ and similarly for the other extended functions above whenever it does not make any confusion. 
\end{notat}

\begin{remark}
    Since $supp (u)\subset \mathbb{T}\cross I$, the following equality holds 
    \begin{equation}
        \fint_{\mathbb{T\cross R}} u(x)dx=\fint_{\mathbb{T}\cross I} u(x)dx
    \end{equation}
    and similar when we have $p$ and any product between these functions.
\end{remark}
Let us define the spherically averaged energy flux structure-function,
\begin{equation}
    \overline{D}(l):=\mathbb{E}\fint_{\mathbb{S}}\fint_{\mathbb{T}\times I}|\delta_{ln}u(x)|^2\delta_{ln}u(x)\cdot n\ dxdn,
\end{equation}
and spherically averaged correlation functions,
\begin{align}
    \overline{\Gamma}(l)&:=\fint_{\mathbb{S}}\trace\Gamma(ln)dn,\\
    \overline{\varTheta}(l)&:=\fint_{\mathbb{S}}\trace\varTheta(ln)dn,\label{coriolis spherical av}\\
    \overline{a}(l)&:=\fint_{\mathbb{S}}\trace a(ln)dn.
\end{align}
Then we can write \eqref{KHM} as follows.

\begin{lem}\label{lemma:velocityKHM}
The following identity holds for each $l>0$
\begin{align}\label{relation}
    \overline{D}(l)=&-2\nu l\fint_{|y|\leq l}\Delta \trace\Gamma(y)dy+2\alpha l\fint_{|y|\leq l} \trace\Gamma(y)dy+2 l\fint_{|y|\leq l} \trace\varTheta(y)dy\notag\\
    &-2 l\fint_{|y|\leq l} \trace a(y)dy\notag\\
    =&-4\nu\overline{\Gamma}'(l)+\frac{4\alpha}{l}\int_0^l r\overline{\Gamma}(r)dr+\frac{4}{l}\int_0^l r\overline{\varTheta}(r)dr-\frac{4}{l}\int^l_0r\overline{a}(r)dr.
\end{align}
\end{lem}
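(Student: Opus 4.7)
The plan is to derive \eqref{relation} by specializing the test function $\phi$ in the weak KHM identity \eqref{KHM} to a smooth approximation of the ``indicator tensor'' $\mathbf{1}_{\{|y|\le l\}}\,Id$, and then letting the approximation parameter tend to zero. Concretely, for each small $\delta>0$ I will choose a cutoff $\Phi_\delta \in C_c^\infty((0,\infty))$ with $0\le \Phi_\delta\le 1$, $\Phi_\delta\equiv 1$ on $[2\delta,l-\delta]$ and $\mathrm{supp}\,\Phi_\delta \subset (\delta,l)$, and set $\phi_\delta(y) := \Phi_\delta(|y|)\,Id$, which has the admissible form \eqref{testfunction} with $\Psi\equiv 0$. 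The strategy is then to substitute $\phi_\delta$ into \eqref{KHM}, pass to the limit $\delta\to 0^+$ on every term, and rearrange.

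On the left-hand side, $\partial_j\phi_\delta(y) = \Phi_\delta'(|y|)\hat y^j\, Id$, and \eqref{dj definition} gives $\sum_j \hat y^j \trace D^j(y) = \mathbb{E}\fint |\delta_y u|^2 \delta_y u \cdot \hat y\, dx$. Polar coordinates then yield
\begin{equation*}
\sum_{j=1}^2\int_{\mathbb{R}^2}\partial_j\phi_\delta : D^j\, dy \;=\; 2\pi\int_0^\infty \Phi_\delta'(r)\, r\, \overline D(r)\, dr,
\end{equation*}
and since $r\mapsto r\,\overline D(r)$ is continuous and bounded by Remark~\ref{rmk1}, integration by parts together with dominated convergence produces the limit $-2\pi l\, \overline D(l)$. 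For the viscous contribution on the right, I will integrate by parts twice to transfer $\Delta$ onto $\Gamma$, so that $\int \Delta\phi_\delta : \Gamma\, dy = \int\Phi_\delta(|y|)\,\Delta\trace\Gamma(y)\, dy \to \int_{|y|\le l}\Delta\trace\Gamma\, dy$. The drag, Coriolis and noise terms are handled in the same way: $\int \phi_\delta : X\, dy = \int\Phi_\delta(|y|)\,\trace X(y)\, dy \to \int_{|y|\le l}\trace X\, dy$ for $X\in\{\Gamma,\varTheta,a\}$. Inserting these four limits into \eqref{KHM}, dividing through by $-2\pi l$, and using $\fint_{|y|\le l} = \tfrac{1}{\pi l^2}\int_{|y|\le l}$ produces the first equality in \eqref{relation}.

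The second equality is then a polar-coordinate rewriting of the volume integrals. For $X\in\{\Gamma,\varTheta,a\}$,
\begin{equation*}
\int_{|y|\le l}\trace X(y)\, dy \;=\; \int_0^l r \int_\mathbb{S} \trace X(rn)\, dn\, dr \;=\; 2\pi\int_0^l r\,\overline X(r)\, dr,
\end{equation*}
so $2l\fint_{|y|\le l}\trace X\, dy = \tfrac{4}{l}\int_0^l r\,\overline X(r)\, dr$, matching the drag, Coriolis and noise contributions in the second line of \eqref{relation}. For the dissipation term the divergence theorem combined with the identity $\overline\Gamma'(l) = \fint_\mathbb{S}\nabla\trace\Gamma(ln)\cdot n\, dn$ (obtained by differentiating the definition of $\overline\Gamma$ in $l$) gives $\int_{|y|\le l}\Delta\trace\Gamma\, dy = l\int_\mathbb{S} \partial_r\trace\Gamma(ln)\, dn = 2\pi l\,\overline\Gamma'(l)$, whence $-2\nu l\fint_{|y|\le l}\Delta\trace\Gamma\, dy = -4\nu\,\overline\Gamma'(l)$, as required.

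The only technical point is justifying the $\delta\to 0$ limits and the integrations by parts above; these will follow directly from the uniform $L^\infty$ bounds and the $C^3$ regularity of $\Gamma,\varTheta,a$ and $D^j$ recorded in Remark~\ref{rmk1}, so no additional estimates on the stationary solution are required beyond those already in place.
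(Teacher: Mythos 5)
Your proposal is correct and follows essentially the same route as the paper: both specialize the weak KHM identity \eqref{KHM} to radial test functions $\phi=\Phi(|y|)\,Id$ and localize to the ball $\{|y|\le l\}$, with the same bookkeeping of constants and the same use of the divergence theorem to turn the viscous term into $-4\nu\overline{\Gamma}'(l)$. The only cosmetic difference is that the paper first rewrites \eqref{KHM} in polar coordinates as a distributional ODE in $l$ and then integrates it from $0$ to $l$, whereas you implement that integration directly by letting $\Phi$ approximate $\mathbf{1}_{[0,l]}$ and passing to the limit, justified by the boundedness and $C^3$ regularity recorded in Remark \ref{rmk1}.
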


\begin{proof}
    Let us assume that 
    \begin{equation}
    \phi(y)=\Phi(|y|)Id, 
\end{equation}
i.e. $\Psi(|y|)=0$ in \eqref{testfunction}. Then, we rewrite \eqref{KHM} as
\begin{align}\label{KHM solo phi}
    \sum^2_{j=1}\int_{\mathbb{R}^2}\partial_j\phi(y):D^j(y)dy&=4\nu\int_{\mathbb{R}^2}\Delta\phi(y):\Gamma(y)dy-4\alpha\int_{\mathbb{R}^2}\phi(y):\Gamma(y)dy\notag\\
    &-4\int_{\mathbb{R}^2}\phi(y):\varTheta(y)dy+4\int_{\mathbb{R}^2}\phi(y):a(y)dy
\end{align}
In spherical coordinates, \eqref{KHM solo phi} becomes 

\begin{align}\label{KHM sferiche}
    \int_{\mathbb{R}^+}\fint_\mathbb{S}l\partial_j\phi(l):D^j(ln)dldn&=4\nu\int_{\mathbb{R}^+}\fint_\mathbb{S}l\Delta\phi(l):\Gamma(ln)dldn\\
    &-4\alpha\int_{\mathbb{R}^+}\fint_\mathbb{S}l\phi(l):\Gamma(ln)dldn\notag\\
    &-4\int_{\mathbb{R}^+}\fint_\mathbb{S}l\phi(l):\varTheta(ln)dldn\notag\\
    &+4\int_{\mathbb{R}^+}\fint_\mathbb{S}l\phi(l):a(rn)dldn.\notag
\end{align}
Integrating by parts, we get
\begin{align}
    -\int_{\mathbb{R}^+}\left(\overline{D}'(l)+\frac{1}{l}\overline{D}(l)\right)l\Phi(l)dl&=4\nu\int_{\mathbb{R}^+}l\Phi(l)\left(\overline{\Gamma}''(l)+\frac{1}{l}\overline{\Gamma}'(l)\right)dl\\
    &-4\alpha\int_{\mathbb{R}^+}l\Phi(l)\overline{\Gamma}(l)dl-4\int_{\mathbb{R}^+}l\Phi(l)\overline{\Theta}(l)dl\notag\\ &+4\int_{\mathbb{R}^+}l\Phi(l)\overline{a}(l)dl.\notag
\end{align}
Then, the following ODE holds in the sense of distribution
\begin{equation*}
    -\partial_l\left(l^2\frac{\overline{D}(l)}{l}\right)=-4l\left[\nu\left(\overline{\Gamma}''(l)+\frac{1}{l}\overline{\Gamma}'(l)\right)-\alpha\overline{\Gamma}(l)-\overline{\varTheta}(l)+\overline{a}(l)\right].
\end{equation*}
Integrating in $l$, we have 
\begin{equation*}
    \frac{\overline{D}(l)}{l}=\frac{4}{l^2}\int_0^l \nu\left(r\overline{\Gamma}'(r)\right)'-r\alpha\overline{\Gamma}(r)-r\overline{\varTheta}(r)+r\overline{a}(r)dr
\end{equation*}
from which \eqref{relation}.
\end{proof}

\subsection{Modified Vorticity KHM relations}
Similarly to the previous section we derive the Modified Vorticity KHM relations in terms of spherically averaged coordinates. The new term is denoted by $\overline{\mathfrak{Q}}$.
We remark that only the direct enstrophy cascade can be written in the mixed velocity-vorticity structure function where the spherically averaged KHM fully describes the problem at small scale.

We define the two point correlation functions for the vorticity, curl of the Coriolis force and curl of the noise as
\begin{align}
    \mathfrak{C}(y)&=\mathbb{E}\fint_{\mathbb{T}\cross I} \omega(x)\omega(x+y) dx;\\
    \mathfrak{Q}(y)&=\frac{1}{2}\beta\mathbb{E}\fint_{\mathbb{T}\cross I}u^2(x)\omega(x+y)+u^2(x+y)\omega(x)dx;\\
    \mathfrak{a}(y)&=\frac{1}{2}\sum_j b_j^2\fint_{\mathbb{T}\cross I} \left(\nabla \cross{e_j(x)}\right)\left(\nabla \cross{e_j}(x+y)\right) dx;
\end{align}
as well as the corresponding enstrophy flux structure function,
\begin{equation}
    \mathfrak{D}(y)=\mathbb{E}\fint_{\mathbb{T}\cross I} \left|\delta_y\omega(x)\right|^2\delta_yu(x)dx.
\end{equation}
\begin{remark}\label{rmkkhmreg2}
As a consequence of Remark \ref{rmk1}, recalling that $\omega=\partial_1 u^2-\partial_2 u^1$, we can see that these quantities are at least $C^2$. 
\end{remark}

\begin{proposition} (Vorticity KHM relation)
Let $\omega$ be a statistically stationary solution to $\eqref{vorticityequation}$ as in \cite[Corollary 2.2]{GigliolaAmiraliYuriWP}. Then the following relation holds.
\begin{equation}\label{vorticitykhm}
    \nabla \cdot\mathfrak{D}(y)=-4\nu\Delta\mathfrak{C}(y)+4\alpha\mathfrak{C}(y)+4\beta\mathfrak{Q}(y)-4\mathfrak{a}(y).
\end{equation}
\end{proposition}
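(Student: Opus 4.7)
The plan is to mirror the proof of Proposition \ref{weakvelocitykhmprop}, applied now to the scalar vorticity equation \eqref{vorticityequation} instead of the momentum equation. Two simplifications arise: there is no pressure term (so no radial test function of the form \eqref{testfunction} is needed), and the Coriolis contribution collapses to the single scalar term $\beta u^2$. First I would extend $\omega$ to $\mathbb{T}\times\mathbb{R}$ exactly as $u$ was extended, mollify to obtain
\begin{equation*}
d\omega^\varepsilon + [(u\cdot\nabla)\omega]^\varepsilon\,dt + \beta (u^2)^\varepsilon\,dt = \nu\Delta\omega^\varepsilon\,dt - \alpha\omega^\varepsilon\,dt + d(\nabla\times\zeta)^\varepsilon,
\end{equation*}
and then apply the stochastic product rule to $\omega^\varepsilon(t,x)\omega^\varepsilon(t,x+y)$ in analogy with \eqref{productrule}. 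The cross-variation term delivers $2\mathfrak{a}^\varepsilon(x,x+y)\,dt$.

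Next I would integrate in $x$, take expectation, use $\nabla\cdot u=0$ to recast $(u\cdot\nabla)\omega = \nabla\cdot(u\omega)$, and invoke stationarity of $\omega$ to obtain
\begin{equation*}
0 = \mathrm{NL}^\varepsilon(y) + 2\nu\Delta_y\mathfrak{C}^\varepsilon(y) - 2\alpha\mathfrak{C}^\varepsilon(y) - 2\beta\mathfrak{Q}^\varepsilon(y) + 2\mathfrak{a}^\varepsilon(y),
\end{equation*}
where
\begin{equation*}
\mathrm{NL}^\varepsilon(y) = \partial_{y^j}\mathbb{E}\fint_{\mathbb{T}\times\mathbb{R}}\bigl[(u^j\omega)^\varepsilon(x)\omega^\varepsilon(x+y) - \omega^\varepsilon(x)(u^j\omega)^\varepsilon(x+y)\bigr]dx.
\end{equation*}
The crucial algebraic step is then a Yaglom-type identity for $\mathfrak{D}^\varepsilon$: expanding $|\delta_y\omega|^2 = \omega(x+y)^2 - 2\omega(x)\omega(x+y) + \omega(x)^2$, differentiating in $y^j$, and using repeatedly that $\nabla\cdot u = 0$, one checks that pure $x$-divergence pieces such as $\partial_j(\omega^2 u^j)(x+y)$ integrate to zero and the survivors assemble into $\mathrm{NL}^\varepsilon = \tfrac{1}{2}\nabla_y\cdot\mathfrak{D}^\varepsilon$. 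Substituting this and multiplying by $-2$ gives the $\varepsilon$-regularised form of \eqref{vorticitykhm}.

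The final step is the limit $\varepsilon\to 0$, performed termwise as in Proposition \ref{weakvelocitykhmprop}. Local uniform convergence of $C^\varepsilon$ handles $\mathfrak{a}^\varepsilon$; for $\mathfrak{C}^\varepsilon$ and $\mathfrak{Q}^\varepsilon$ I would combine mollifier convergence in $L^2$ with the uniform vorticity bound \eqref{uniform bound on vorticity} and dominated convergence, exactly as was done for $\Gamma^\varepsilon$ and $\varTheta^\varepsilon$. For the cubic flux $\nabla\cdot\mathfrak{D}^\varepsilon$, the 2D Sobolev embedding $H^1\hookrightarrow L^p$ for every $p<\infty$ together with \eqref{uniform bound on vorticity} yields $u\in L^4$ and $\omega\in L^2$, hence $u\omega\in L^{4/3}$ and the triple products converge in $L^1$, so dominated convergence applies. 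Since by Remark \ref{rmkkhmreg2} all limiting correlations are at least $C^2$, the resulting identity is pointwise on $\mathbb{R}^2$.

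The hardest part I expect is the verification of $\mathrm{NL}^\varepsilon = \tfrac{1}{2}\nabla\cdot\mathfrak{D}^\varepsilon$: several incompressibility-driven cancellations must line up correctly to identify the transport term with a divergence of the cubic mixed velocity-vorticity structure function. Every remaining step is a direct scalar analogue of the vector computations already carried out in Proposition \ref{weakvelocitykhmprop}, with the $\beta u^2$ term treated by the same $L^2$ bounds that control $Fu$ in the velocity case.
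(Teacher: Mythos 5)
Your proposal follows essentially the same route as the paper's proof: mollify the vorticity equation, apply the stochastic product rule to $\omega^\varepsilon(t,x)\,\omega^\varepsilon(t,x+y)$, extract the cross-variation as $\mathfrak{a}^\varepsilon$, integrate, invoke stationarity, and pass to the limit $\varepsilon\to 0$ using the regularity of the correlation functions. The only difference is that you spell out the Yaglom-type identification of the transport term with $\tfrac12\nabla_y\cdot\mathfrak{D}^\varepsilon$, which the paper handles by pointing to the analogous computation (via Lemma 3.4 of the cited reference) in the velocity case.
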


\begin{proof}
The proof is essentially the same as in Proposition \ref{weakvelocitykhmprop}, where we now have the correlation functions $\mathfrak{C},\mathfrak{Q},\mathfrak{a}$ and $\mathfrak{D}$. Once again, this relation is defined for the extended version of the functions as in the relation \eqref{KHM}. 
For analogy, we avoid using the heavy notation by considering only the regularization. 

Let us consider the mollifier stochastic equation 
\begin{align}\label{stochasticeqvorticity}
    &d\omega^\varepsilon(t,x)+\nabla \cdot (u \otimes \omega)^\varepsilon(t,x)dt-\nu \Delta \omega^\varepsilon(t,x) dt+\alpha \omega^\varepsilon(t,x) dt+ \beta{u^2}^\varepsilon (t,x)dt\notag\\
    =&d\nabla \cross\zeta^\varepsilon(t,x).
\end{align}

Let $y\in\mathbb{T}\times I$, as discussed in Proposition \ref{weakvelocitykhmprop} the evolution equation of $\omega^\varepsilon(t,x)\cdot \omega^\varepsilon(t,x+y)$ satisfies the stochastic product rule 
\begin{align}\label{productrulevorticity}
    d\left(\omega^\varepsilon(t,x)\cdot \omega^\varepsilon(t,x+y)\right)&=d\omega^\varepsilon(t,x)\cdot  \omega^\varepsilon(t,x+y)+\omega^\varepsilon(t,x)\cdot  d\omega^\varepsilon(t,x+y)\notag\\
    &+\left[\omega^\varepsilon(\cdot,x), \omega^\varepsilon(\cdot,x+y)\right](t),
\end{align}
where $\left[\omega^\varepsilon(\cdot,x), \omega^\varepsilon(\cdot,x+y)\right](t)$ is the cross variation of $\omega^\varepsilon(\cdot,x)$ and $\omega^\varepsilon(\cdot,x+y)$ and is given by
\begin{align*}
    \left[\omega^\varepsilon(\cdot,x), \omega^\varepsilon(\cdot,x+y)\right](t)&=\left[\nabla \cross\zeta^\varepsilon(\cdot,x),\nabla \cross\zeta^\varepsilon(\cdot,x+y)\right](t)\\
    &=\sum_{i,j}b_i b_j\left({\nabla \cross e_i}^\varepsilon \cdot {\nabla \cross e_j}^\varepsilon(x+y)\right)\left[\beta_i,\beta_j\right](t)\\
    &=tC^\varepsilon(x,x+y),
\end{align*}
with,
\begin{equation*}
    C^\varepsilon(x,x+y)=\sum_{j=1}^\infty b_j^2 {\nabla \cross e_j}^\varepsilon(x)\cdot  \nabla \cross e^\varepsilon_j(x+y).
\end{equation*}
Upon integrating \eqref{productrulevorticity} in $x$ and $t$, using \eqref{stochasticeqvorticity} and taking expectation we get 
\begin{align}\label{timedipvorticity}
\mathfrak{C}^\varepsilon(T,y)-\mathfrak{C}^\varepsilon(0,y)
=&-\frac{1}{2}\sum_{j=1}^2\int_0^T\partial_j{\mathfrak{D}^j}^\varepsilon(t,y)dt+2\nu\int_0^T\Delta\mathfrak{C}^\varepsilon(t,y)dt\notag\\&-2\alpha\int_0^T\mathfrak{C}^\varepsilon(t,y)dt-2\int_0^T\mathfrak{Q}^\varepsilon(t,y)dt+2T\mathfrak{a}^\varepsilon(y)
\end{align}
where we have defined the regularised quantities,
\begin{align*}
       \mathfrak{C}^\varepsilon(t,y)&:=\mathbb{E}\fint_{\mathbb{T}\times I}\omega^\varepsilon(t,x)\cdot \omega^\varepsilon(t,x+y)dx,\\ \mathfrak{Q}^\varepsilon(t,y)&:=\frac{1}{2}\beta\mathbb{E}\fint_{\mathbb{T}\times I}{u^2}^\varepsilon(t,x)\cdot \omega^\varepsilon(t,x+y)+\omega^\varepsilon(t,x)\cdot {u^2}^\varepsilon(t,x+y)dx,\\
    {\mathfrak{D}^i}^\varepsilon(t,y)&:=2\mathbb{E}\fint_{\mathbb{T}\times I}(u^i \omega)^\varepsilon(t,x)\cdot \omega^\varepsilon(t,x+y)-\omega^\varepsilon(t,x)\cdot(u^i\omega)^\varepsilon(t,x+y)dx,\\
    \mathfrak{a}^\varepsilon(y)&:=\frac{1}{2}\fint_{\mathbb{T}\times I}C^\varepsilon(x,x+y)dx.
\end{align*}
As seen in Proposition \ref{weakvelocitykhmprop} we can pass to the limit as $\varepsilon\to0$, thanks to the $C^2$ regularity mentioned in 
Remark \ref{rmkkhmreg2}. Moreover, if we assume stationarity, we get \eqref{vorticitykhm}.
\end{proof}

As well as for the velocity relations in the previous section, we define the spherically averaged enstrophy flux structure function 
\begin{equation}
    \overline{\mathfrak{D}}(l):=\mathbb{E}\fint_{\mathbb{S}}\fint_{\mathbb{T}\times I}|\delta_{ln}\omega(x)|^2\delta_{ln}u(x)\cdot n\ dxdn,
\end{equation}
and spherically averaged correlation functions,
\begin{align}
    \overline{\mathfrak{C}}(l)&:=\fint_{\mathbb{S}}\mathfrak{C}(ln)dn,\\
    \overline{\mathfrak{Q}}(l)&:=\fint_{\mathbb{S}}\mathfrak{Q}(ln)dn,\\
    \overline{\mathfrak{a}}(l)&:=\fint_{\mathbb{S}}\mathfrak{a}(ln)dn.
\end{align}

\begin{lem}\label{lemma:vorticityKHM}
The following identity holds for each $l>0$
\begin{align}\label{sphericallyaveragedvorticity}
    \overline{\mathfrak{D}}(l)=&-2\nu l\fint_{|y|\leq l}\Delta \mathfrak{C}(y)dy+2\alpha l\fint_{|y|\leq l} \mathfrak{C}(y)dy+2 l\fint_{|y|\leq l} \mathfrak{Q}(y)dy-2 l\fint_{|y|\leq l} \mathfrak{a}(y)dy\notag\\
    =&-4\nu\overline{\mathfrak{C}}'(l)+\frac{4\alpha}{l}\int_0^l r\overline{\mathfrak{C}}(r)dr+\frac{4}{l}\int_0^l r\overline{\mathfrak{Q}}(r)dr-\frac{4}{l}\int^l_0r\overline{\mathfrak{a}}(r)dr
\end{align}
\end{lem}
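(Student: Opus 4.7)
The plan is to exploit the fact that, unlike the tensor-valued KHM of Proposition \ref{weakvelocitykhmprop}, the vorticity KHM \eqref{vorticitykhm} is already a scalar \emph{pointwise} identity on $\mathbb{R}^2$ (its right-hand side is $C^2$ by Remark \ref{rmkkhmreg2}). Consequently, instead of testing against a radial bump and reducing to a distributional ODE as in the proof of Lemma \ref{lemma:velocityKHM}, I can simply integrate \eqref{vorticitykhm} over the ball $B_l=\{y\in\mathbb{R}^2:|y|\leq l\}$ and apply the divergence theorem. This yields both equalities in \eqref{sphericallyaveragedvorticity} in two essentially mechanical steps.

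\textbf{First equality.} Integrate \eqref{vorticitykhm} over $B_l$ and apply the divergence theorem to $\nabla\cdot\mathfrak{D}$:
\begin{equation*}
\int_{B_l}\nabla\cdot\mathfrak{D}(y)\,dy=\int_{\partial B_l}\mathfrak{D}(y)\cdot\hat{y}\,dS(y)=l\int_{\mathbb{S}}\mathfrak{D}(ln)\cdot n\,dn=2\pi l\,\overline{\mathfrak{D}}(l),
\end{equation*}
using $\partial B_l=\{ln:n\in\mathbb{S}\}$, $dS=l\,dn$, and the convention $\fint_{\mathbb{S}}=(2\pi)^{-1}\int_{\mathbb{S}}$. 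Dividing by $|B_l|=\pi l^2$ gives $\fint_{B_l}\nabla\cdot\mathfrak{D}\,dy=2\overline{\mathfrak{D}}(l)/l$, and multiplying the integrated \eqref{vorticitykhm} by $l/2$ produces the first line of \eqref{sphericallyaveragedvorticity}.

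\textbf{Second equality.} For the drag, Coriolis, and noise terms, convert each volume average to a radial one via polar coordinates: for any sufficiently regular $g$,
\begin{equation*}
\fint_{B_l}g(y)\,dy=\frac{2}{l^2}\int_0^l r\,\overline{g}(r)\,dr.
\end{equation*}
Applied to $\mathfrak{C}$, $\mathfrak{Q}$, $\mathfrak{a}$, this immediately produces the claimed terms $\frac{4\alpha}{l}\int_0^l r\overline{\mathfrak{C}}(r)\,dr$, $\frac{4}{l}\int_0^l r\overline{\mathfrak{Q}}(r)\,dr$, and $-\frac{4}{l}\int_0^l r\overline{\mathfrak{a}}(r)\,dr$. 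For the viscosity term, apply the divergence theorem a second time:
\begin{equation*}
\int_{B_l}\Delta\mathfrak{C}\,dy=\int_{\partial B_l}\partial_r\mathfrak{C}\,dS=2\pi l\,\overline{\mathfrak{C}}'(l),
\end{equation*}
since $\overline{\mathfrak{C}}'(l)=\fint_{\mathbb{S}}n\cdot\nabla\mathfrak{C}(ln)\,dn$. Hence $-2\nu l\fint_{B_l}\Delta\mathfrak{C}\,dy=-4\nu\,\overline{\mathfrak{C}}'(l)$, completing the second line.

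There is no real analytic obstacle here: all the hard work (mollification, the Itô product rule, the passage $\varepsilon\to 0$, and the use of stationarity) was done in establishing \eqref{vorticitykhm}, and the $C^2$ regularity of $\mathfrak{C},\mathfrak{Q},\mathfrak{a},\mathfrak{D}$ justifies every pointwise manipulation. The only care needed is in tracking the normalizations of $\fint_{\mathbb{S}}$ and $\fint_{B_l}$ and in identifying the boundary flux $\int_{\partial B_l}\mathfrak{D}\cdot\hat{y}\,dS$ with the spherically averaged third-order enstrophy flux $2\pi l\,\overline{\mathfrak{D}}(l)$; once this identification is made, \eqref{sphericallyaveragedvorticity} follows by elementary rearrangement.
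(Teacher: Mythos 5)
Your proof is correct and is essentially the paper's own argument: the paper's proof of this lemma is a one-sentence appeal to integrating \eqref{vorticitykhm} over $\{|y|\leq l\}$ and applying the divergence theorem, which is exactly what you carry out (in more detail, including the boundary-flux identification $\int_{\partial B_l}\mathfrak{D}\cdot\hat{y}\,dS=2\pi l\,\overline{\mathfrak{D}}(l)$ and the polar-coordinate conversion of the volume averages). The only cosmetic point is the coefficient of the Coriolis term: \eqref{vorticitykhm} is written with $4\beta\mathfrak{Q}$ even though $\mathfrak{Q}$ already carries a factor of $\beta$, while the lemma uses the coefficient $4$; you follow the lemma's normalization, which is the consistent one.
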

\begin{proof}
As observed in the proof of \eqref{relation}, using the divergence theorem and integrating both sides of \eqref{vorticitykhm} over $\{|y|\leq l\}$, we obtain the formula for the spherically averaged structure function \eqref{sphericallyaveragedvorticity}.
\end{proof}

\section{Direct cascade}\label{sectiondirect}
In this section, we prove Theorem \ref{thm:enstrophycascade} starting from the KHM relations in the form presented in Lemmas \ref{lemma:velocityKHM} and \ref{lemma:vorticityKHM}. The line of proof mainly follows what we see in \cite{bed2D} with a noticeable difference due to the inclusion of the Coriolis force and thus the respective $\overline{\varTheta}$ and $\mathfrak{Q}$ terms in \eqref{relation} and \eqref{sphericallyaveragedvorticity}. By using the regularity results concerning the invariant measure's support we prove that small scale contribution of these therms is zero on the third order structure functions. 
\begin{proof}[Proof of \eqref{directcascade3.1a}]
We write the equation for the spherically averaged flux $\overline{\mathfrak{D}}(l)$ as
\begin{equation}
    \frac{\overline{\mathfrak{D}}(l)}{l}=-\frac{4\nu\overline{\mathfrak{C}}'(l)}{l}+\frac{4\alpha}{l^2}\int_0^l r\overline{\mathfrak{C}}(r)dr+\frac{4}{l^2}\int_0^l r\overline{\mathfrak{Q}}(r)dr-\frac{4}{l^2}\int^l_0r\overline{\mathfrak{a}}(r)dr
\end{equation}
\textbf{Step 1.} Firstly, we show that 
\begin{equation}
    \lim_{\nu\to 0}\sup_{\alpha\in(0,1)}\sup_{l\in(l_\nu,1)}\left|\frac{4\nu\overline{\mathfrak{C}}'(l)}{l}\right|=0,
\end{equation}
for all $l_\nu$ satisfying \eqref{conditionlv}. In fact, by definition of $\overline{\mathfrak{C}}(l)$ we have, 
\begin{equation}
    \overline{\mathfrak{C}}'(l)=\sum_{i=1}^2\mathbb{E}\fint_{\mathbb{S}}\fint_{\mathbb{T}\cross I} n^i\omega(x)\delta_{x^i}\omega(x+ln)dxdn,
\end{equation}
where the prime denotes the $l$-derivative. Using Cauchy-Schwarz inequality we get,
\begin{equation}
    \left|\overline{\mathfrak{C}}'(l)\right|\leq\mathbb{E}\norm{\nabla \omega}_{L^2}\norm{\omega}_{L^2}.
\end{equation}
Then, using the enstrophy balance \eqref{enstrophy balance},
\begin{equation}
    \left|\frac{4\nu\overline{\mathfrak{C}}'(l)}{l}\right|\leq\frac{1}{l_\nu}\left(\nu\mathbb{E}\norm{\nabla \omega}^2_{L^2}\right)^{\frac{1}{2}}\left(\nu\mathbb{E}\norm{ \omega}^2_{L^2}\right)^{\frac{1}{2}}\leq\frac{\eta^{1/2}}{l_\nu}\left(\nu\mathbb{E}\norm{\omega}^2_{L^2}\right)^{\frac{1}{2}}
\end{equation}
that vanishes as $\nu\to 0$ by \eqref{uniform bound on vorticity} and \eqref{conditionlv}.

\textbf{Step 2.} We prove vanishing of the damping over the inertial range, i.e.,
\begin{equation}
    \lim_{\alpha\to 0} \sup_{\nu\in(0,1)}\left|\frac{4\alpha}{l^2}\int^l_0r\overline{\mathfrak{C}}(r)dr\right|=0.
\end{equation}
By definition and using Cauchy-Schwarz inequality, we have 
\begin{equation}
    \left|\overline{\mathfrak{C}}(l)\right|\leq\mathbb{E}\norm{\omega}^2_{L^2}.
\end{equation}
Then,
\begin{equation}
    \left|\frac{4\alpha}{l^2}\int^l_0r\overline{\mathfrak{C}}(r)dr\right|\leq\frac{4\alpha}{l^2}\mathbb{E}\norm{\omega}^2_{L^2}\frac{l^2}{2}=2\alpha\mathbb{E}\norm{\omega}^2_{L^2}.
\end{equation}
which vanishes by \eqref{uniform bound on vorticity}.

\textbf{Step 3.} We now turn to the last term and we prove that,
\begin{equation}\label{mathfraka}
    \lim_{l_I\to 0}\sup_{l\in(0,l_I)}\left|\frac{4}{l^2}\int_0^l r\overline{\mathfrak{a}}(r)dr-2\eta\right|=0.
\end{equation}
By the regularity of the noise, we can Taylor expand $\overline{\mathfrak{a}}$, 
\begin{equation}
    \overline{\mathfrak{a}}(l)=\overline{\mathfrak{a}}(0)+O(l).
\end{equation}
Moreover, by definition, 
\begin{equation}
    \overline{\mathfrak{a}}(0)=\frac{1}{2}\sum_{j=1}^2b_j^2 \fint_{\mathbb{S}}\fint_{\mathbb{T}\cross I}\left|\nabla \cross e_j\right|^2 dxdn=\eta
\end{equation}
Then,
\begin{align}
    \left|\frac{4}{l^2}\int_0^l r\overline{\mathfrak{a}}(r)dr\right|&=\left|\frac{4}{l^2}\int_0^l r\left(\overline{\mathfrak{a}}(r)-\overline{\mathfrak{a}}(0)+\overline{\mathfrak{a}}(0)\right)dr\right|\notag\\
    &\leq\frac{4}{l^2}\int_0^l r \left|\overline{\mathfrak{a}}(r)-\overline{\mathfrak{a}}(0)\right|dr+\frac{4}{l^2}\int_0^l r\left|\overline{\mathfrak{a}}(0)\right|dr\notag\\
    &\leq \frac{4}{l^2}\int_0^l O(r^2)dr+\frac{4}{l^2}\frac{l^2}{2}\eta\notag\\
    &=O(l)+2\eta
\end{align}
and \eqref{mathfraka} follows immediately.

\textbf{Step 4.} Finally, we prove 
\begin{equation}
    \lim_{l_I\to0}\limsup_{\nu,\alpha\to 0}\sup_{l\in[l_\nu,l_I]}\left|\frac{4}{l^2}\int_0^l r\overline{\mathfrak{Q}}(r)dr\right|= 0.
\end{equation}
In fact, by definition we have 
\begin{align}
    \overline{\mathfrak{Q}}(0)&=\frac{\beta}{2}\mathbb{E}\fint_{\mathbb{S}}\fint_{\mathbb{T}\cross I}u^2(x)\omega(x)+u^2(x)\omega(x)dxdn\notag\\
    &=\beta\mathbb{E}\fint_{\mathbb{T}\cross I}u^2\partial_1 u^2dx-\beta\mathbb{E}\fint_{\mathbb{T}\cross I}u^2\partial_2 u^1dx\notag\\
    &=0+\beta\mathbb{E}\fint_{\mathbb{T}\cross I}u^1\partial_2 u^2dx\notag\\
    &=-\beta\mathbb{E}\fint_{\mathbb{T}\cross I}u^1\partial_1 u^1dx=0.
\end{align}
Moreover, 
\begin{align}
    \overline{\mathfrak{Q}}'(l)&=\frac{\beta}{2}\mathbb{E}\fint_{\mathbb{S}}\fint_{\mathbb{T}\cross I}\sum_{j=1}^2u^2(x)\partial_{x^j}\omega(x+ln)n^j+\omega(x)\partial_{x^j}u^2(x+ln)n^j.
\end{align}
After an integration by parts and using Cauchy-Schwartz inequality, we deduce
\begin{equation}
    \left|\overline{\mathfrak{Q}}'(l)\right|\lesssim\mathbb{E}\norm{\nabla u}_{L^2}\norm{\omega}_{L^2}=\mathbb{E}\norm{\omega}^2_{L^2}<\infty.
\end{equation}
We remark that the last estimate is $l$ independent. Since,
\begin{equation}
    \left|\overline{\mathfrak{Q}}(l)\right|\leq\left|\overline{\mathfrak{Q}}(0)\right|+\sup\left|\overline{\mathfrak{Q}}'(l)\right|l\leq Cl,
\end{equation}
we have
\begin{equation}
    \left|\frac{4}{l^2}\int_0^l r\overline{\mathfrak{Q}}(r)dr\right|\leq\frac{C}{l^2}\int_0^l r^2dr=Cl
\end{equation}
which vanishes as $l\to 0$.
\end{proof}

\begin{proof}[Proof of \eqref{directcascade3.1b}]
Let us consider the equality \eqref{relation} and divide it by $l^3$, we have
\begin{equation}
    \frac{\overline{D}(l)}{l^3}=-\frac{4\nu}{l^3}\overline{\Gamma}'(l)+\frac{4\alpha}{l^4}\int_0^l r\overline{\Gamma}(r)dr+\frac{4}{l^4}\int_0^l r\overline{\varTheta}(r)dr-\frac{4}{l^4}\int^l_0r\overline{a}(r)dr.
\end{equation}
\textbf{Step 1.} We first show that 
\begin{equation}\label{firstestimate}
    \frac{4}{l^4}\int_0^l r\overline{a}(r)dr=\frac{2\varepsilon}{l^2}-\frac{\eta}{4}+o_{l\to0}(1),
\end{equation}
where $\overline{a}$ is defined by
\begin{equation}
    \overline{a}(r)=\frac{1}{2}\sum_{j}b_j^2\fint_{\mathbb{S}}\fint_{\mathbb{T}\times I}e_j(x)\cdot e_j(x+rn)dx.
\end{equation}
We Taylor expand the factor $e_j(x+rn)$,
\begin{equation}\label{taylorexp}
    e_j(x+rn)=e_j(x)+rn\cdot\nabla e_j(x)+\frac{1}{2}rn\cdot He_j(x)rn+o(r^2)
\end{equation}
where $He_j$ is the Hessian matrix of $e_j$ and $o$ is the classic little $o$-notation, i.e., for two general functions $f(x),g(x)$, if $\lim_{x\to \infty}\frac{f(x)}{g(x)}=0$, then $f(x)=o(g(x))$.

We observe that 
\begin{equation}\label{propertyderiv}
    \int_{\mathbb{T}\cross I}e_j\cdot\partial_{x^i}e_jdx=0.
\end{equation}
Then,
\begin{align*}
    \fint_{\mathbb{T}\cross I} e_j(x)\cdot e_j(x+rn&)dx\\
    =&\fint_{\mathbb{T}\cross I} e_j(x)\left(e_j(x)+rn\cdot\nabla e_j(x)+\frac{1}{2}rn\cdot He_j(x)rn+o(r^2)\right)dx\\
    =&\fint_{\mathbb{T}\cross I} |e_j(x)|^2+\frac{r^2}{2}\sum_{i,m=1}^2 n^i n^m e_j(x)\cdot\partial_{x^m}\partial_{x^i}e_j(x)\ dx+o(r^3)\\
    =&\fint_{\mathbb{T}\cross I} |e_j(x)|^2-\frac{r^2}{2}\sum_{i,m=1}^2 n^i n^m \partial_{x^m}e_j(x)\cdot\partial_{x^i}e_j(x)\ dx+o(r^3).
\end{align*}
Since,
\begin{equation}\label{2nprop}
    \fint_{\mathbb{S}} n^in^m \ dn=\frac{1}{2}\delta_{im},
\end{equation}
we have
\begin{align*}
    \frac{1}{2}\sum_j b_j^2\fint_\mathbb{S}&\fint_{\mathbb{T}\cross I}e_j(x)\cdot e_j(x+rn)dxdn\\
    &=\frac{1}{2}\sum_j b_j^2 \fint_\mathbb{S}\fint_{\mathbb{T}\cross I}|e_j(x)|^2-\frac{r^2}{2}\sum_{i,m}n^i n^m\partial_{x^m}e_j(x)\cdot\partial_{x^i}e_j(x)\ dxdn+o(r^3)\\
    &=\frac{1}{2}\sum_j b_j^2 \fint_{\mathbb{T}\cross I}|e_j(x)|^2-\frac{r^2}{4}|\nabla e_j(x)|^2\ dx+o(r^3)
\end{align*}
where 
\begin{equation*}
    |\nabla e_j|^2=(\partial_{x^1}e_j^1)^2+(\partial_{x^2}e_j^2)^2+(\partial_{x^1}e_j^2)^2+(\partial_{x^2}e_j^1)^2.
\end{equation*}
By incompressibility we can rewrite the above equality as 
\begin{equation*}
    |\nabla e_j|^2=(\partial_{x^1}e_j^2)^2+(\partial_{x^2}e_j^1)^2-2\partial_{x^1}e_j^1\partial_{x^2}e_j^2.
\end{equation*}
Using integration by parts we get,
\begin{align}\label{propietacurl}
    \frac{1}{2}&\sum_j b_j^2\fint_\mathbb{S}\fint_{\mathbb{T}\cross I}e_j(x)\cdot e_j(x+rn)dxdn\\
    =\frac{1}{2}&\sum_j b_j^2 \left(\fint_{\mathbb{T}\cross I}|e_j(x)|^2dx-\frac{r^2}{4}\fint_{\mathbb{T}\cross I}|\nabla e_j(x)|^2\ dx\right)+o(r^3)\notag\\
    =\frac{1}{2}&\sum_j b_j^2 \left(\fint_{\mathbb{T}\cross I}|e_j(x)|^2dx-\frac{r^2}{4}\fint_{\mathbb{T}\cross I}(\partial_{x^1}e_j^2)^2+(\partial_{x^2}e_j^1)^2-2\partial_{x^2}e_j^1\partial_{x^1}e_j^2\ dx\right)+o(r^3)\notag\\
    =\frac{1}{2}&\sum_j b_j^2 \left(\fint_{\mathbb{T}\cross I}|e_j(x)|^2dx-\frac{r^2}{4}\fint_{\mathbb{T}\cross I}|\nabla \cross e_j(x)|^2\ dx\right)+o(r^3)\notag
\end{align}
Therefore, we have 
\begin{align*}
    \overline{a}(r)&=\frac{1}{2}\sum_jb_j^2\fint_{\mathbb{T}\cross I}|e_j(x)|^2 dx-\frac{r^2}{8}\sum_jb_j^2\fint_{\mathbb{T}\cross I}|\nabla \cross{e_j}|^2 dx+o(r^3)\\
    &=\varepsilon-\frac{r^2}{4}\eta+o(r^3)
\end{align*}
Multiplying by $r$ and integrating we obtain \eqref{firstestimate}.\\
\textbf{Step 2.} We want to show for $l_\nu$ satisfying $\eqref{conditionlv}$ 
\begin{equation}
    \lim_{l_I\to 0} \limsup_{\nu,\alpha\to 0} \sup_{l\in(l_\nu,l_I)}\left|\frac{4\alpha}{l^4}\int_0^l r\overline{\Gamma}(r)dr-\frac{2\varepsilon}{l^2}\right|=0.
\end{equation}
We remark that in the above formula, the order in which the $\nu$ and $\alpha$ are taken to $0$ does not matter.
We can write 
\begin{equation*}
    \frac{4\alpha}{l^4}\int_0^l r\overline{\Gamma}(r)dr=\frac{4\alpha}{l^4}\int_0^l r\left(\overline{\Gamma}(r)-\overline{\Gamma}(0)\right)dr+\frac{2\alpha}{l^2}\overline{\Gamma}(0)
\end{equation*}
Using the energy balance \eqref{energy balance} we have
\begin{equation*}
    \alpha\overline{\Gamma}(0)=\varepsilon-\nu\mathbb{E}\norm{\nabla u}^2.
\end{equation*}
Noting that  
\begin{equation}\label{propvorticty}
    \norm{\nabla u}^2_{L^2}=\norm{\omega}^2_{L^2},
\end{equation}
and recalling the conditions \eqref{uniform bound on vorticity} and \eqref{conditionlv}, we get
\begin{equation*}
    \alpha\overline{\Gamma}(0)=\varepsilon-\nu\mathbb{E}\norm{\omega}^2=\varepsilon+o(l_\nu^2).
\end{equation*}
Moreover, using a chain rule,
\begin{equation}\label{derivativecomp}
    \overline{\Gamma}'(l)=\sum_{i,j}\mathbb{E}\fint_{\mathbb{S}}\fint_{\mathbb{T}\cross I}n^i\partial_{x^i}u^j(x+ln)u^j(x)\ dxdn
\end{equation}
By \eqref{propertyderiv} we observe that $\overline{\Gamma}'(0)=0$. Furthermore, taking a second derivative of $\overline{\Gamma}$ and integrating by parts, we have 
\begin{equation*}
    \overline{\Gamma}''(l)=\sum_j\mathbb{E}\fint_{\mathbb{S}}\fint_{\mathbb{T}\cross I}\sum_{i,m} n^i n^m \partial_{x^i}u^j(x+ln)\partial_{x^m}u^j(x)\ dxdn.
\end{equation*}
By Cauchy–Schwarz inequality,
\begin{equation*}
   |\overline{\Gamma}''(l)|\leq C\mathbb{E}\norm{\nabla u}^2_{L^2}=C \mathbb{E}\norm{\omega}^2_{L^2}.
\end{equation*}
Using Taylor expansion for $\overline{\Gamma}(l)$,
\begin{equation*}
    \overline{\Gamma}(l)\approx \overline{\Gamma}(0)+\underbrace{ \overline{\Gamma}'(0)}_{=0}\cdot l+\frac{1}{2}\sup  \overline{\Gamma}''(l)\cdot l^2.
\end{equation*}
Then,
\begin{equation}
    \left| \overline{\Gamma}(l)- \overline{\Gamma}(0)\right|\lesssim \frac{1}{2}\sup  \overline{\Gamma}''(l)\cdot l^2\leq Cl^2\mathbb{E}\norm{\omega}^2_{L^2},
\end{equation}
and in particular 
\begin{equation}
    \frac{4\alpha}{l^4}\int_0^l r\left| \overline{\Gamma}(r)- \overline{\Gamma}(0)\right|dr\leq C\alpha\mathbb{E}\norm{\omega}^2_{L^2}\to 0,
\end{equation}
when $\alpha\to0$ due to \eqref{uniform bound on vorticity}.

\textbf{Step 3.} Analogous to \cite{bed3D} we have 
\begin{equation} \label{Step3eq}
    \limsup_{\nu,\alpha\to 0} \sup_{l\in(l_\nu,l_I)}\frac{4\nu}{l^3}\overline{\Gamma}'(l)=0.
\end{equation}
To show the above limit, first observe that: 
\begin{equation}
    \overline{\Gamma}'(l)-\overline{\Gamma}'(0)= \int_0^l \overline{\Gamma}''(l')d l'.
\end{equation}
However, in the previous step we already observed that $\overline{\Gamma}'(0)=0$, and\\ 
$\sup_{l' \in [0,l]} |\overline{\Gamma}''(l')| \leq C \mathbb{E}\norm{\omega}^2_{L^2}$. Therefore, we get: 
\begin{equation}
 |\overline{\Gamma}'(l)| \leq   l C    \mathbb{E}\norm{\omega}^2_{L^2}
\end{equation}
This means 
$$\frac{4 \nu}{l^3} \overline{\Gamma}'(l) \lesssim 
\frac{ \nu \mathbb{E}\norm{\omega}^2_{L^2}}{l^2}.  $$
This gives us \eqref{Step3eq} thanks to \eqref{uniform bound on vorticity} and \eqref{conditionlv}.

\textbf{Step 4.} We show that
\begin{equation}
    \lim_{l_I\to0}\limsup_{\nu,\alpha\to 0}\sup_{l\in[l_\nu,l_I]}\left|\frac{4}{l^4}\int_0^l r\overline{\varTheta}(r)dr\right|=0.
\end{equation}
By definition 
\begin{equation}\label{explicit theta}
    \overline{\varTheta}(l):=\mathbb{E}\fint_{\mathbb{S}}\fint_{\mathbb{T}\cross I} \beta ln^2\left(u^2(x)u^1(x+ln)-u^1(x)u^2(x+ln)\right) dxdn.
\end{equation}
Let us consider 
\begin{equation}
    \mu(l):=\mathbb{E}\fint_{\mathbb{S}}\fint_{\mathbb{T}\cross I}n^2 \left(u^2(x)u^1(x+ln)-u^1(x)u^2(x+ln)\right) dxdn.
\end{equation}
By definition,
\begin{equation}
    \mu(0)=0.
\end{equation}
Using a chain rule, we find
\begin{align*}
    \mu'(l)=\mathbb{E}\fint_{\mathbb{S}}\fint_{\mathbb{T}\cross I}&n^2u^2(x)\left(n^1\partial_{x^1}u^1(x+ln)+n^2\partial_{x^2}u^1(x+ln)\right)\\
    -&n^2u^1(x)\left(n^1\partial_{x^1}u^2(x+ln)+n^2\partial_{x^2}u^2(x+ln)\right)dxdn.
\end{align*}
Then,
\begin{align*}
    \mu'(0)=\mathbb{E}\fint_{\mathbb{S}}\fint_{\mathbb{T}\cross I}&n^2u^2(x)\left(n^1\partial_{x^1}u^1(x)+n^2\partial_{x^2}u^1(x)\right)\\-&n^2u^1(x)\left(n^1\partial_{x^1}u^2(x)+n^2\partial_{x^2}u^2(x)\right)dxdn.
\end{align*}
Using the incompressibility property, i.e. $\partial_{x^1}u^1(x)+\partial_{x^2}u^2(x)=0$, and integration by parts we have 
\begin{align*}
    \mu'(0)=\mathbb{E}\fint_{\mathbb{S}}\fint_{\mathbb{T}\cross I}&\underbrace{-n^1n^2u^2(x)\partial_{x^2}u^2(x)}_{=0}+(n^2)^2u^2\partial_{x^2}u^1(x)\\
    -&n^1n^2u^1(x)\partial_{x^1}u^2(x)+\underbrace{(n^2)^2u^1\partial_{x^1}u^1(x)}_{=0}dxdn.
\end{align*}
For the other two terms, after integration by parts, we conclude in the same way. In the end, we proved that 
\begin{equation}
    \mu'(0)=0.
\end{equation}
Now, we want to compute the second derivative of $\mu''$;
\begin{align*}
    |\mu''(l)|=\Bigg|\mathbb{E}\fint_{\mathbb{S}}\fint_{\mathbb{T}\cross I} &n^1n^2\partial_{x^1}u^1(x)\left(n^1\partial_{x^1}u^2(x+ln)+n^2\partial_{x^2}u^2(x+ln)\right)\\
    +&(n^2)^2\partial_{x^2}u^1(x)\left(n^1\partial_{x^1}u^2(x+ln)+n^2\partial_{x^2}u^2(x+ln)\right)\\
    -&n^1n^2\partial_{x^1}u^2(x)\left(n^1\partial_{x^1}u^1(x+ln)+n^2\partial_{x^2}u^1(x+ln)\right)\\
    -&(n^2)^2\partial_{x^2}u^2(x)\left(n^1\partial_{x^1}u^1(x+ln)+n^2\partial_{x^2}u^1(x+ln)\right)dxdn\Bigg|.
\end{align*}
Using Cauchy–Schwarz inequality and \eqref{uniform bound on vorticity}, we deduce,
\begin{equation*}
    |\mu''(l)|\leq C_1\mathbb{E}\norm{\nabla u}^2_{L^2}<\infty.
\end{equation*}
Since, 
\begin{equation}
    |\mu(l)|\leq|\mu(0)|+|\mu'(0)l|+\sup|\mu''(l)|l^2\leq C l^2,
\end{equation}
and thus 
\begin{equation}
    |\overline{\varTheta}(l)|\leq C|\beta l^3|,
\end{equation}
we have,
\begin{equation}
    \left|\frac{4}{l^4}\int_0^l r\overline{\varTheta}(r)dr\right|\leq \frac{4}{l^4}\int_0^l C|\beta r^4| dr\leq C_0\frac{1}{l^4}l^5
\end{equation}
that goes to 0 when $l\to 0$.
\end{proof}

\textbf{ Acknowledgment} Y.C. was funded by PRIN-MUR grant 2022YXWSLR "Boundary analysis for dispersive and viscous fluids". A.H. was funded in part by the FWO grant G098919N, the ANR grant LSD-15-CE40-0020-01, and the NSF Grant DMS-1929284. G.S. was funded in part by the NSF grant DMS-2052651 and the Simons Foundation through the Simons Collaboration  Wave Turbulence Grant.

\textbf{Conflict of interest.} The authors declare that they have no conflict of interest.

\textbf{Data availability.} Data sharing is not applicable.
We do not analyse or generate any datasets, because our work proceeds within a theoretical and mathematical approach.

\bibliographystyle{sn-basic}
\bibliography{sn-bibliography}

\end{document}